\newcommand{\RR}{\mathbb R}
\newcommand{\NN}{\mathbb N}
\newcommand{\cU}{\mathcal{U}}
\newcommand{\eps}{\varepsilon}
\newcommand{\abs}[1]{\left\vert#1\right\vert}
\newcommand{\nabs}[1]{\vert#1\vert}
\newcommand{\measN}[1]{\abs{#1}}
\newcommand{\norm}[1]{\left\|#1\right\|}
\newcommand{\nnorm}[1]{\|#1\|}
\newcommand{\mysetl}[2] {\left\{\left.#1\,\right|\,#2\right\}}
\newcommand{\esssup}{\operatorname{ess\: sup}}
\newcommand{\loc}{\text{loc}}
\newcommand{\rad}{\text{rad}}
\renewcommand{\div}{\operatorname{div}}
\newcommand{\curl}{\operatorname{curl}}
\newcommand{\mbf}[1]{\boldsymbol{#1}}
\newcommand{\To}{\longrightarrow}
\newenvironment{myenum}{\vspace*{-1ex}\begin{enumerate}}{\end{enumerate}\vspace*{-1ex}}
\newtheorem{theorem}{Theorem}
\newtheorem{thm}[theorem]{Theorem}
\newtheorem{cor}[theorem]{Corollary}
\newtheorem{lem}[theorem]{Lemma}
\newtheorem{prop}[theorem]{Proposition}
\theoremstyle{definition}
\newtheorem{defn}[theorem]{Definition}
\newtheorem{ex}[theorem]{Example}
\theoremstyle{remark}
\newtheorem{rem}[theorem]{Remark}
\renewenvironment{proof}[1][\proofname]{\par
  \normalfont
  \trivlist
  \item[\hskip\labelsep\itshape
    \bfseries{#1.}]\ignorespaces
}{%
  \qed\endtrivlist
}
\newcommand{\R}{{\mathbb R}}
\newcommand{\N}{{\mathbb N}}
\title[Weak-strong convergence and applications]{A weak-strong convergence property and symmetry of minimizers of constrained variational problems in $\mathbb{R}^N$}
\author[H.\ Hajaiej]{Hichem Hajaiej}
\author[S.\ Kr\"omer]{Stefan Kr\"omer}
\address{Ipeit (Institut preparatoire aux etudes d'ingenieur de Tunis)
\newline\indent
 2, Rue Jawaher Lel Nahru -
1089 Montfleury - tunis
 Tunisie}
\email{hichem.hajaiej@gmail.com}
\address{Mathematisches Institut, Universit\"at zu K\"oln
\newline\indent
50923 K\"oln, Germany}
\email{skroemer@math.uni-koeln.de}
\thanks{ HH
was partially supported by the Tunisian ARUB project : {\em Analyse
Math\'ematique et Applications 04/UR/15-02. }}
\begin{document}

\begin{abstract}
We prove a weak-strong convergence result for functionals of the form $\int_{\mathbb{R}^N} j(x, u, Du)\,dx$ on $W^{1,p}$, 
along equiintegrable sequences. We will then use it to study cases of equality in the extended Polya-Szeg\"o inequality and discuss applications of such a result to prove the symmetry of minimizers of a class of variational problems including nonlocal terms under multiple constraints. 
\end{abstract}
\subjclass[2000]{primary 35J50; secondary 35B05, 49J45}

\maketitle

\section{Introduction}

Weak-strong convergence results have attracted many mathematicians during the last decades. 
In the simplest case, we search for hypotheses on an integrand $A$ such that 
\begin{equation}
\label{WS1}
\left.
\begin{array}{l}
u_n \rightharpoonup u \text{ in } L^p(\Omega;\RR^m),\\
\int_\Omega A(u_n)\,dx \rightarrow \int_\Omega A(u)\,dx
\end{array}
\right\}
~\Longrightarrow~u_n\rightarrow u~\text{(strongly)}
\end{equation}
On a bounded domain $\Omega$, appropriate conditions 
were determined by Visintin~\cite{Vi84a}, with strict convexity of $A$ playing a crucial role. Of course, these remain sufficient if the sequence $u_n$ is constrained to some subset of $L^p$, for instance in the gradient case where
$u_n=D v_n$ for some $v_n\in W^{1,p}$. For this particular case and more general integrands, 
refinements were obtained in several powerful papers~\cite{Sy98a},~\cite{Sy99a} and~\cite{Zha89a}, 
showing that
\begin{equation}
\label{WS2}
\left.
\begin{array}{l}
u_n \rightharpoonup u \text{ in } W^{1,p}(\Omega;\RR^m),\\
\int_\Omega j(x, u_n, Du_n)\,dx \rightarrow \int_\Omega j(x,u,Du)\,dx,
\end{array}
\right\}
~\Longrightarrow~u_n\rightarrow u~\text{(strongly)}
\end{equation} 
in particular relying on strict convexity of $j$ in the gradient variable.
However, all of these articles yield strong convergence at most on bounded domains. Results on unbounded domains with the sequence constrained to solutions of a linear system of differential equations 
(which includes the gradient case by using the constraint $\curl u_n=0$, but is not limited to it) were recently obtained by the second author in \cite{Kroe09cp}.

The aim of this paper is to derive a specialized weak-strong convergence result of the form \eqref{WS2} for $\Omega=\RR^N$,
as a crucial tool to study cases 
of equality in a generalized Polya-Szeg\"o inequality. As we shall see below, for this purpose, it suffices to consider sequences $u_n$ that satisfy suitable equiintegrability conditions (namely sequences of iterated polarizations, cf.~Section 2), which allows us to dispose of coercivity assumptions on $j$ that otherwise would be needed
(in fact, without coercivity, for general sequences the best one can hope to obtain from the premises of \eqref{WS2} and strict convexity of $F$ in the gradient variable is strong convergence in $W^{1,r}_\loc$ for $1\leq r<p$). We then present an application of this result to 
prove the symmetry of all the minimizers of a class of functionals involving terms of the form $\int j_i(x, u_i, Du_i)$, a local and a nonlocal nonlinearity, for vector-valued $U=(u_1,\ldots,u_m)$ whose components are constrained to spheres in $L^p$.
Here, due to the constraint, we only need coercivity of $j_i$ in $D u_i$ (and not in $u_i$) to obtain minimizers.
 
Let us first put the reader in the general framework of our study.

Let $m\in\N$ and $U= (u_1, \ldots, u_m)\in \left[W_+^{1,p}(\R^N)\right]^m$, where $1<p<\infty$ and
$W_+^{1,p}(\R^N)$ denotes the cone of non-negative functions belonging to $W^{1,p}(\R^N)$.

Ultimately, we are interested in symmetry properties of minimizers of the variational problem  
\begin{equation}
\label{PC}
\inf\limits_{U\in S} E(U)=:I,
\end{equation}
where 
$E(U):=\textstyle{\sum_{j=1}^k} E^j(U)$ for some $E^j:\left[W^{1,p}_+(\R^N)\right]^m\to \RR$. The functional 
is constrained to a set $S\subset W^{1,p}_+(\R^N)^m$ with the following property: 
$$
	\text{If $U\in S$ then $U^H=\left(u_1^H, \ldots, u_m^H\right)$ and $U^*=\left(u_1^*, \ldots, u_m^*\right)$ are also in $S$}. 
$$
Here, $u_i^H$ denotes the polarization (also called two-point rearrangement, e.g., \cite{BroSo00}) of $u_i$ with respect 
to an closed half-space $H$ containing the origin. The set of such half-spaces is denoted by 
$\mathcal{H}$ below, and $u_i^*$ is the Schwarz rearrangement (or radial nonincreasing reaarangement, e.g., \cite{Ka85B}) of $u_i$.

If for any $1\leq j\leq k$, 
\begin{equation}
	\label{0.0i}
	E^j(U^*)\leq E^j(U^H) \leq E^j(U), \quad\forall U\in W_+^{1,p}(\R^N)^m ~\forall H\in\mathcal{H}
\end{equation}
and thus  
\begin{equation}
	\label{0.0}
	E(U^*)\leq E(U^H) \leq E(U), \quad\forall U\in W_+^{1,p}(\R^N)^m ~\forall H\in\mathcal{H},
\end{equation}
then it is sufficient to consider a Schwarz symmetric minimizing sequence of \eqref{PC},
i.e., each component of the sequence is radial and radially decreasing. Therefore, 
the existence of a minimizer of \eqref{PC} becomes less difficult to prove 
thanks to the compact embedding of~$W_{rad}^{1,p}(\R^N)$ in $L^q(\R^N)$ for some appropriate $q$ (cf.~Lemma~\ref{lem:radcompact} below). 

Moreover, to obtain symmetry properties of the solutions of \eqref{PC}, 
one is lead to study the cases of equality in \eqref{0.0i}. We are thus interested
in finding suitable assumptions under which
\begin{equation}
	\label{0.1i}
	E^j(U^*)= E^j(U) \Rightarrow U=U^*~~\text{(up to a translation)}.
\end{equation}
Of course if \eqref{0.0i} holds true for any $1\leq j\leq k$ and \eqref{0.1i} is true for one~$j$, 
then it can be easily deduced that all the minimizers of \eqref{PC} (if they exist) are 
Schwarz symmetric up to a translation.  

In our application in Section~\ref{sec:app}, we consider a functional given by three summands $E^j$. For $U=(u_1,\ldots,u_m)\in W^{1,p}(\RR^N)^m$, 
the first is an energy term of the form
\begin{equation}
	\label{0.01}
	E^1(U)=\sum_{i=1}^m \int_{\R^N}j_i(x,u_i,\abs{Du_i})\,dx.
\end{equation}
The second term $E^2$ adds a local integral functional of lower order, namely, 
\begin{equation}
	\label{0.02}
	E^2(U)=-\int_{\R^N}F(|x|,u_1,\ldots,u_m)\,dx,
\end{equation}
and $E^3$ represents a nonlocal contribution of the form
\begin{equation}
	\label{0.03}
	E^3(U)=-\int_{\R^N}\int_{\R^N}G\left(u_1(x), \ldots, u_m(x)\right)V(|x-y|)G\left(u_1(y),\ldots,u_m(y)\right)\,dy. 
\end{equation}
The proof of \eqref{0.0} and \eqref{0.1i} is based on polarization techniques. 
Indeed, under suitable supermodularity assumptions on $F$, i.e., \textbf{(F 3)} in Section~\ref{sec:app},
we have that 
\begin{equation}
	\label{0.4}
	E^2(u_1,\ldots,u_m)\leq E^2\left(u_1^H, \ldots, u_m^H\right) \quad\forall H\in\mathcal{H},
\end{equation}
see \cite{BuHa06a}. (In fact, quite recently, it was proved by the first author~\cite{H3} that
supermodularity is also necessary for \eqref{0.4}.)
If $G$ satisfies a related assumption, i.e., \textbf{(G 4)} in Section~\ref{sec:app}, and $V$ is a non-increasing kernel, we observe 
in Proposition~\ref{prop:Grearr} that
\begin{equation}
	\label{0.5}
	E^3(u_1,\ldots,u_m)\leq E^3\left(u_1^H, \ldots, u_m^H\right) \quad\forall H\in\mathcal{H}.
\end{equation}
On the other hand: For any $u\in L^p(\R^N)$, it was established in~\cite{BroSo00} 
that there exists a sequence $u_n$, obtained by iterated polarizations of $u$ with respect 
to some appropriate closed half-spaces $H_n\in \mathcal{H}$, such that $u_n\to u^*$ in $L^p(\RR^N)$,
see Theorem~\ref{thm:BroSo} below.
Moreover, 
\eqref{0.4} (or \eqref{0.5}) together with this approximation of the Schwarz rearrangement enable us to conclude that 
\begin{equation}
	\label{0.7}
	E^2(u_1,\ldots,u_m)\leq E^2\left(u_1^*, \ldots, u_m^*\right) 
\end{equation}
and 
\begin{equation}
	\label{0.8}
	E^3(u_1,\ldots,u_m)\leq E^3\left(u_1^*, \ldots, u_m^*\right). 
\end{equation}
Thus, the cases of equality in~\eqref{0.7} and \eqref{0.8}
reduce to the less difficult identities
\begin{equation}
	\label{0.11}
	E^2(u_1,\ldots,u_m)= E^2\left(u_1^H, \ldots, u_m^H\right) \quad\forall H\in\mathcal{H}
\end{equation}
and 
\begin{equation}
	\label{0.12}
	E^3(u_1,\ldots,u_m)= E^3\left(u_1^H, \ldots, u_m^H\right) \quad\forall H\in\mathcal{H}.
\end{equation}
In case of $E^2$, this problem was completely solved in~\cite{BuHa06a}, where it is proved that 
under strict supermodularity assumptions (\textbf{(F 3)} with strict inequalities), 
\eqref{0.11} is equivalent to $U=U^*$.

Therefore, we already have suitable conditions ensuring 
that whenever \eqref{PC} has a minimizer, each component is radial and radially decreasing 
(up to a translation). While these results have 
many relevant applications in economics (\cite{Carlier} and the references therein) and 
physics (\cite{BuHa06a}, \cite{H3} and the references therein),
in some important contexts, the strict supermodularity of $F$ is not a plausible assumption. 
More precisely, the profile of stable electromagnetic waves traveling along a 
planar wave-guide are given by the ground states of the energy functional
$$
L(u)=\frac{1}{2}\int u'^2-\int F(|x|,u)\,dx
$$
under the constraint $|u|_2=c$, $|x|$ is the position relative to the optical axis. 
$F$ is determined by the index of refraction of the media and $c>0$ is a parameter 
related to the wave speed~\cite{St1}. The wave-guide is composed from different layers (core and claddings),
and the index of refraction is a non-increasing function with respect to the distance $|x|$, 
but it is constant in each layer in the most relevant situations. 
Therefore $F$ cannot be a stricly supermodular function. Nevertheless, experiments done by engineers 
show that the ground state is Schwarz symmetric (up to a translation). 

For such local non-linearities and a $G$ which is supermodular but not strictly so, 
we are compelled to study the cases of equality in the following, more complicated rearrangement inequality: 
\begin{equation}
	\label{0.13}
	\int_{\R^N}j(u,\abs{Du})\,dx\geq \int_{\R^N}j(u^*,\nabs{Du^*})\,dx,
\end{equation}
called the generalized Polya-Szeg\"o inequality. We are then looking for reasonable assumptions on $j$ under which 
equality in~\eqref{0.13} implies (roughly) $u=u^*$ (up to translations). 
This is carried out in Section 2, where we show that equality in \eqref{0.13}
reduces to equality in the standard Polya-Szeg\"o inequality
\begin{equation}
	\label{0.18}
	\int_{\R^N} \abs{Du}^p\,dx\geq \int_{\R^N} \nabs{Du^*}^p \,dx,
\end{equation}
which in turn was completely solved by Brothers and Ziemer \cite{BroZie88a}.

Let us also point out that in~\cite{HS1a, HS1b}, cases of equality in the generalized Polya-Szeg\"o inequality 
were established under the ``fact'' that the integrand~$j$ is such that equality in \eqref{0.13} 
implies that $u_n\rightarrow u^*$ in $W^{1,p}$ for the sequence $(u_n)$ of iterated polarizations of $u$ approximating $u^*$ in $L^p$ (hypothesis (3.4) of Theorem 3.5 in \cite{HS1b}), which of course is not a very tangible assumption. 

For a single constraint, this deficiency was resolved in~\cite{HS2}, 
but only under some restrictive assumptions on~$j$ and its derivatives (Theorem 1.1 in~\cite{HS2}). 
The idea developed there is based on the fact that due to the rearrangement inequalities, 
we know that if~$u$ is a solution of \eqref{PC}, then the sequence of iterated polarizations $(u_n)$ is also a solution, i.e, 
$E(u_n)=I$ and $u_n\in S \quad\forall n\in\N$. 
Thus, each term $u_n$ satisfies an Euler-Lagrange equation obtained by tools of non-smooth analysis. 
As a second step, one shows that the corresponding Lagrange multipliers $\lambda_n$ form a bounded sequence in~$\R$. 
Almost everywhere convergence of the sequence of gradients $Du_n$ to $Du^*$ then follows 
by applying the result of Dal Maso and Murat \cite{DaMu98a} to an appropriate sequence of Leray-Lions type operators 
associated to $j(u_n,|Du_n|)$. 
Ultimately, this leads to $\norm{Du}_{L^p(\R^N)}=\nnorm{Du^*}_{L^p(\R^N)}$.
Apart from the fact that this approach imposes a lot of technical hypotheses on~$j$, 
it does not easily extend to the case of multiple constraints because of the second step.

Among other things, we obtain symmetry properties of the ground state solutions of a system of Euler-Lagrange equations 
associated to our functional, see \eqref{EL} below.
The establishment of symmetry properties of such solutions of \eqref{EL} 
constitutes in itself a branch in analysis. Numerous papers have been addressed to this issue, 
especially when $m=1, g=0, j(t,s)=s^2$ and $F(r,s_1)=h(s_1)$. For $m>1$, 
the determination of the symmetry of the solutions of (2.1) has applications in the theory of Bose-Einstein condensates, 
optical pulse propagation in birefringent fiber, interactions of m-wave packets (see~\cite{H1} and the references therein). 
The high degree of difficulty encountered by researches is due to the fact that most of the tools used 
for scalar equations do not extend to $m>1$. Until quite recently, most of the articles dealt with the autonomous case 
$F(r,s_1,s_2)=\frac{1}{2k}s_1^{2k}+\frac{1}{2k}s_2^{2k}+\frac{\beta}{k}s_1^{k}s_2^{k}$, 
$g\equiv 0, j(t,s)=s^2$.
In~\cite{H4}, the first author extended these results to general~$m>1$ and~$F$ but with $j$ and~$G$ as before. 
Note that the case $G\not\equiv 0$ is important since it describes 
many interesting situations in which we have a Coulomb-type interactions between particles. 
Here, we study the symmetry of ground state solutions of \eqref{EL} under very general assumptions.


\section{Weak-strong convergence and symmetry of minimizers via iterated polarizations}

\subsection{Polarization and Schwarz rearrangement: Basic facts}

Let $H\subset \RR^N$ be a (closed) half-space, i.e., $H=\{x\in \RR^N\mid x\cdot e\geq t\}$ for some $e\in S^{N-1}$ and $t\in \RR$,
and let $u^H:\RR^N\to \RR$ denote the polarization of a function $u:\RR^N\to \RR$ with respect $H$, i.e.,
$$
	u^H:=\left\{\begin{array}{ll}
		\max\{u,v\}&\text{on $H$,}\\
		\min\{u,v\}&\text{on $\RR^N\setminus H$,}
	\end{array}	\right.
	\text{where}~v(x):=u(x_H)~\text{for $x\in \RR^N$}
$$
and $x_H$ denotes the reflection of $x$ with respect to the hyperplane $\partial H$. 
On functions with values in $\RR^m$, the polarization operates component-wise.
If $u\in W^{1,p}(\RR^N)$, then $u^H\in W^{1,p}(\RR^N)$ and 
$$
	D u^H=\left\{\begin{array}{ll}
		D u&\text{a.e.~on}~(\{u>v\}\cap H)\cup (\{u<v\}\setminus H),\\
		D u=D v&\text{a.e.~on}~\{u=v\},\\
		D v&\text{a.e.~on}~(\{u<v\}\cap H)\cup (\{u>v\}\setminus H).
	\end{array}\right.
$$
Using this, it is not difficult to check the following invariance of homogeneous integral functionals under polarization:
\begin{lem}[\cite{HS1b}, e.g.]\label{lem:polintinv}
Suppose that 
$I:=\int_{\RR^N} f(u,\nabs{D u})\,dx$ is well defined and finite for some $f:\RR\times \RR_+\to \RR$ and some $u\in W^{1,p}(\RR^N)$.
Then for every half-space $H$, $I^H:=\int_{\RR^N} f(u^H,\nabs{D u^H})\,dx$ is well defined and finite, and $I^H=I$.
\end{lem}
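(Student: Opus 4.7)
The plan is to decompose $\RR^N$ into the five measurable pieces on which the explicit formulas for $u^H$ and $Du^H$ are constant descriptions, and then to recombine them using the reflection $\sigma\colon x\mapsto x_H$ across $\partial H$. Concretely, I would split the domain as
$$
\RR^N = \bigl(\{u>v\}\cap H\bigr)\cup \bigl(\{u<v\}\setminus H\bigr)\cup \{u=v\}\cup \bigl(\{u<v\}\cap H\bigr)\cup \bigl(\{u>v\}\setminus H\bigr),
$$
and substitute the formulas listed just before the lemma. On the first three pieces, $(u^H,\nabs{Du^H})=(u,\nabs{Du})$ a.e., so those contributions to $I^H$ equal the corresponding contributions to $I$ on the nose. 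On the last two pieces, $(u^H,\nabs{Du^H})=(v,\nabs{Dv})$ a.e.

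The key step is handling these last two pieces by the change of variables $y=\sigma(x)$. Since $\sigma$ is a Euclidean isometry, it is a Lebesgue measure-preserving involution that (up to the null set $\partial H$) interchanges $H$ and $\RR^N\setminus H$. Because $v=u\circ\sigma$, we have $\nabs{Dv(x)}=\nabs{Du(\sigma(x))}$ a.e., and $\sigma$ maps $\{u<v\}\cap H$ onto $\{u>v\}\setminus H$ and vice versa. Therefore
$$
\int_{\{u<v\}\cap H}\!\!\! f(v,\nabs{Dv})\,dx + \int_{\{u>v\}\setminus H}\!\!\! f(v,\nabs{Dv})\,dx
= \int_{\{u>v\}\setminus H}\!\!\! f(u,\nabs{Du})\,dy + \int_{\{u<v\}\cap H}\!\!\! f(u,\nabs{Du})\,dy.
$$
Summing these equalities with the three trivial pieces reassembles exactly $\int_{\RR^N} f(u,\nabs{Du})\,dx = I$.

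To handle well-definedness without any sign hypothesis on $f$, I would apply the decomposition above separately to the nonnegative integrands $f^+(u,\nabs{Du})$ and $f^-(u,\nabs{Du})$, where all the integrals are unambiguously defined in $[0,\infty]$. Finiteness of $I$ forces both $\int f^\pm(u,\nabs{Du})\,dx$ to be finite, the same argument then gives finiteness of $\int f^\pm(u^H,\nabs{Du^H})\,dx$ with matching values, and subtraction yields the claim.

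The argument is essentially bookkeeping; the one point that deserves care is the change-of-variables identity, which hinges on $\sigma$ being an isometry so that $\nabs{D(u\circ\sigma)}=\nabs{Du}\circ\sigma$ a.e., together with the fact that $\sigma$ swaps $\{u<v\}\cap H$ and $\{u>v\}\setminus H$ (using $v\circ\sigma = u$). I do not anticipate a serious obstacle beyond verifying these measurability and null-set technicalities.
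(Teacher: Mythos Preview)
Your argument is correct and is precisely the computation the paper has in mind: the paper does not spell out a proof but merely states that the identity ``is not difficult to check'' from the displayed formula for $Du^H$ (citing \cite{HS1b}), and your five-piece decomposition together with the change of variables under the reflection $\sigma$ is exactly that check. The handling of well-definedness via $f^\pm$ is a sensible addition.
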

The Schwarz rearrangement (or Schwarz symmetrization) of a measurable function $u:\RR^N\to \RR_+:=[0,\infty)$ is defined 
as the radially symmetric, radially non-increasing function $u^*:\RR^N\to \RR_+$ such that
$\abs{\{u>h\}}=\abs{\{u^*>h\}}$ for every $h>0$. Note that $u^*$ is unique up to changes on a set of measure zero.
Polarizations and Schwarz symmetrization are linked as follows:
\begin{thm}[\cite{BroSo00}]\label{thm:BroSo}
Let $1\leq p<\infty$, $u_0\in L^{p}(\RR^N)$, $u_0\geq 0$, let $H_n\subset \RR^N$ be a sequence of half-spaces {\bf containing the origin},
and let $(u_n)$ denote the associated sequence of iterated polarizations, i.e., $u_{n}:=u_{n-1}^{H_n}$.
Then $(u_n)$ is relatively compact in $L^p(\RR^N)$. 
Moreover, for a suitable choice of the sequence $H_n$, $u_n\to u^*$ in $L^p(\RR^N)$.
\end{thm}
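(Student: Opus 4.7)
The plan is to combine two fundamental properties of polarization: $L^p$-nonexpansivity, $\|u^H-v^H\|_p\leq\|u-v\|_p$, and invariance of the Schwarz symmetrization under polarizations through the origin, $(u_0^*)^H=u_0^*$ whenever $0\in H$ (because reflection across $\partial H$ moves each point $x\in H$ to an $x_H$ at least as far from $0$, and $u_0^*$ is radially nonincreasing). Combining these yields
\begin{equation*}
\|u_{n+1}-u_0^*\|_p=\|u_n^{H_{n+1}}-(u_0^*)^{H_{n+1}}\|_p\leq\|u_n-u_0^*\|_p,
\end{equation*}
so that $d_n:=\|u_n-u_0^*\|_p$ decreases monotonically to some $\ell\geq 0$.

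For relative compactness of $(u_n)$ in $L^p(\RR^N)$ I would verify the conditions of the Fr\'echet--Kolmogorov criterion. The uniform $L^p$-bound and equiintegrability at large values are immediate from equimeasurability, since $\int_{\{u_n>M\}}u_n^p\,dx=\int_{\{u_0>M\}}u_0^p\,dx$. For tightness at infinity, first observe that if $\supp u_0\subset B_R$, then $\supp u_n\subset B_R$ for every $n$: for $x\in H_{n+1}$ with $|x|>R$, the reflected point $x_{H_{n+1}}$ satisfies $|x_{H_{n+1}}|\geq|x|>R$, so both values defining $u_n^{H_{n+1}}(x)$ vanish, and the case $x\notin H_{n+1}$ is similar. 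For a general $u_0$, approximate by compactly supported $u_0^{(k)}\to u_0$ in $L^p$ and use nonexpansivity to get $\|u_n-u_n^{(k)}\|_p\leq\|u_0-u_0^{(k)}\|_p$ uniformly in $n$, which transfers tightness. Equicontinuity of translations, the last ingredient, I would inherit from $u_0$ via a pointwise estimate of the form $|u^H(x+h)-u^H(x)|\leq\max(|u(x+h)-u(x)|,|u(x_H+h_H)-u(x_H)|)$ on a suitable partition of $\RR^N$, followed by integration.

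For the second assertion, fix a countable subfamily $\mathcal{K}\subset\mathcal{H}$ of half-spaces through the origin that is dense in $\mathcal{H}$ (say, with normals in a countable dense subset of $S^{N-1}$ and shifts in $\QQQ\cap(-\infty,0]$), and schedule its elements so that every member occurs infinitely often; take this schedule as $(H_n)$. If $v\in L^p(\RR^N)$ is any subsequential limit of $(u_n)$, then $\|v-u_0^*\|_p=\ell$ and $v^*=u_0^*$ (by $L^p$-continuity of the Schwarz rearrangement). Suppose $\ell>0$. Then $v\neq v^*=u_0^*$, so some $H\in\mathcal{H}$ satisfies $v\neq v^H$ on a set of positive measure; on this set the nonexpansivity applied to the pair $(v,u_0^*)$ becomes strict (from strict convexity of $t\mapsto t^p$ for $p>1$, or a direct measure-theoretic argument via level sets for $p=1$), giving $\|v^H-u_0^*\|_p<\ell$. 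Density of $\mathcal{K}$ together with $L^p$-continuity of polarization in its half-space data then produces some $K_i$ with $\|v^{K_i}-u_0^*\|_p<\ell$; since $K_i$ recurs infinitely often at steps where $u_n$ is close to $v$, we obtain $d_{n+1}<\ell$ for such $n$, contradicting $d_n\searrow\ell$. Hence $\ell=0$ and the whole sequence converges to $u_0^*$.

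The main obstacle is the equicontinuity-of-translations step in the compactness proof, which demands a delicate pointwise analysis of polarization; the strict-decrease claim for $p=1$ is also subtle, since $t\mapsto t$ is not strictly convex and one must argue directly with the configuration of the level sets of $v$ and $u_0^*$.
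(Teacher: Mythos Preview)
The paper does not prove this theorem; it is stated with attribution to \cite{BroSo00} and used without proof. There is therefore nothing in the paper itself to compare your argument against.

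A few comments nonetheless. For the convergence to $u_0^*$, the original construction in \cite{BroSo00} is \emph{inductive}: $H_{n+1}$ is chosen (depending on $u_n$) so as to nearly maximize the drop in $\|u_n-u_0^*\|_p$. Your approach via a fixed dense sequence scheduled to recur infinitely often is the later alternative of \cite{VS} and \cite{H1}, exactly as the paper records in Remark~\ref{rem:polandsymm}; both are valid, and yours has the advantage that $(H_n)$ is independent of $u_0$.

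For relative compactness, your Fr\'echet--Kolmogorov route can be completed, but the equicontinuity step is more delicate than your pointwise bound suggests. The inequality you write breaks down on the strip $\{x\in H,\ x+h\notin H\}$, where $u^H(x+h)-u^H(x)$ is the difference of a $\min$ and a $\max$ rather than of two maxes; small measure of the strip alone does not control its $L^p$ contribution. One must either carry out a careful case analysis there (exploiting that $|x-x_H|\leq 2|h|$ on the strip) or bypass the modulus of continuity entirely via the Riesz-type convolution inequality --- $\int u_n\,(g*u_n)\,dx$ is nondecreasing in $n$ for every nonnegative radially decreasing $g$ --- which together with equimeasurability also yields compactness. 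Your caveat about $p=1$ in the strict-decrease step is well taken: there one has to analyze directly the equality case of the pointwise inequality $|a\vee b-c|+|a\wedge b-d|\leq|a-c|+|b-d|$ (valid for $c\geq d$).
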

\begin{rem}\label{rem:polandsymm}
The proof in \cite{BroSo00} also provides an inductive rule for the choice of an appropriate sequence of half-spaces which at first glance depends on $p$. 
Nevertheless, if $u_0\in L^p\cap L^q$ for some $q\in [1,\infty)$, $q\neq p$, and the half-spaces $H_n$ are such that $u_n\to u_0^*$ in $L^p$, 
then also $u_n\to u_0^*$ in $L^q$, due to the relative compactness of $u_n$ in this space which is independent of the sequence $H_n$.
Also note that any dense sequence $H_n$ works as shown in \cite{H1} and \cite{VS}, which means that $(H_n)$ can be chosen independently of $u_0$. 
\end{rem}

\subsection{Partial compactness for the gradients of iterated polarizations}
As already observed in \cite{BroSo00}, the compactness of a sequence of iterated polarizations in $W^{1,p}$ is directly related to the Schwarz symmetry of the initial function. In particular, compactness of the gradients of a sequence of iterated polarizations cannot be expected in general. Nevertheless, we are able to obtain a partial result in terms of the following notion of equiintegrability that is well suited for the use on domains with infinite measure:
\begin{defn}[Equiintegrability]
For $1\leq r<\infty$, an open set $\Omega\subset\RR^N$ and a sequence $(U_n)\subset L^r(\Omega)^m$, we say that $(U_n)$ is $r$-equiintegrable if the following two properties hold:
\begin{equation}\label{requiint}
\begin{aligned}
	&\sup \mysetl{\int_E \nabs{U_n}^r\,dx}{n\in\NN, E\subset \Omega,~\abs{E}\leq \delta}\underset{\delta\to 0^+}{\To} 0, \\
	&\sup \mysetl{\int_{\Omega\setminus B_R} \nabs{U_n}^r\,dx}{n\in\NN}\underset{R\to\infty}{\To} 0.
\end{aligned}
\end{equation}
Here, $B_R\subset \RR^N$ denotes a ball of radius $R$ centered at zero.
\end{defn}
Note that for any domain, convergent sequences in $L^r$ are $r$-equiintegrable. Conversely, $r$-equiintegrable sequences are bounded in $L^r$, and the only remaining obstacle to relative compactness in $L^r$ are possible oscillations.

Usually, compactness of a sequence of functions in $L^p$ or related properties do not provide any information about compactness of the associated sequence of gradients (if they exist at all).
However, in the special case of iterated polarizations of a fixed function, the situation is different:
\begin{lem}\label{lem:polgradequiint}
Let $p,q\in [1,\infty)$, let $H_n$ be a sequence of half-spaces in $\RR^N$, let 
$u_0\in W^{1,1}_\loc$ such that $u\in L^q(\RR^N)$ and $D u\in L^p(\RR^N)^N$,
and let $u_n=u_{n-1}^{H_n}$ for $n\in\NN$ be the associated sequence of iterated polarizations of $u_0$. 
Then $q$-equiintegrability of $(u_n)$ implies
$p$-equiintegrability of $(D u_n)$.
\end{lem}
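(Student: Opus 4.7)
\emph{Plan.} I aim to verify the two conditions \eqref{requiint} defining $p$-equiintegrability of $(Du_n)$. The first one (uniform absolute continuity) is essentially automatic: applying Lemma~\ref{lem:polintinv} to integrands of the form $f(s,\tau)=\tau^p\chi_{\{\tau\leq\tau_0\}}$ shows that $|Du_n|$ is equimeasurable with $|Du_0|$ for every $n$. Since $|Du_0|^p\in L^1(\RR^N)$ has an absolutely continuous integral, the Hardy--Littlewood rearrangement estimate gives
\begin{equation*}
\sup_{n\in\NN}\,\sup_{\abs{E}\leq\delta}\int_E|Du_n|^p\,dx\;=\;\int_0^{\delta}(|Du_0|^p)^*(s)\,ds\;=:\;\omega(\delta)\;\To\;0\quad\text{as}~\delta\to 0^+,
\end{equation*}
which is the first condition; the $q$-equiintegrability of $(u_n)$ plays no role at this stage.

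The decay-at-infinity condition is the core of the argument. Assuming $u_0\geq 0$ (otherwise replace $\{u_n\leq h\}$ below by $\{\abs{u_n}\leq h\}$), I split at a level $h>0$ as
\begin{equation*}
\int_{\RR^N\setminus B_R}|Du_n|^p\,dx\;=\;\int_{\{u_n\leq h\}\setminus B_R}|Du_n|^p\,dx\;+\;\int_{\{u_n>h\}\setminus B_R}|Du_n|^p\,dx.
\end{equation*}
The crucial observation is that truncation commutes with polarization: from the distributive identity $\max(\min(a,h),\min(b,h))=\min(\max(a,b),h)$ one checks $(\min(u,h))^H=\min(u^H,h)$, so $v_n:=\min(u_n,h)$ is the $n$-th iterated polarization of $v_0:=\min(u_0,h)$ along the \emph{same} half-spaces. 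Iterating Lemma~\ref{lem:polintinv} with $f(s,\tau)=\tau^p$, and using $Dv_n=Du_n\chi_{\{u_n\leq h\}}$ a.e.\ (which uses only $Du=0$ a.e.\ on every level set of $u$), we obtain
\begin{equation*}
\int_{\{u_n\leq h\}\setminus B_R}|Du_n|^p\,dx\;\leq\;\int_{\RR^N}|Dv_n|^p\,dx\;=\;\int_{\RR^N}|Dv_0|^p\,dx\;=\;\int_{\{u_0\leq h\}}|Du_0|^p\,dx;
\end{equation*}
the right-hand side vanishes as $h\to 0^+$ by dominated convergence and $Du_0=0$ a.e.\ on $\{u_0=0\}$. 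For the second piece, Chebyshev's inequality combined with the hypothesis gives $\abs{\{u_n>h\}\setminus B_R}\leq \eta(R)/h^q$, where $\eta(R):=\sup_n\int_{\RR^N\setminus B_R}u_n^q\,dx\to 0$, and the uniform absolute continuity proved above then yields $\int_{\{u_n>h\}\setminus B_R}|Du_n|^p\,dx\leq\omega(\eta(R)/h^q)$.

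Given $\eps>0$, I first fix $h>0$ so small that $\int_{\{u_0\leq h\}}|Du_0|^p\,dx<\eps/2$, and then choose $R$ large enough that $\omega(\eta(R)/h^q)<\eps/2$; this is possible with $h$ now fixed and $\eta(R)\to 0$. Summing the two bounds proves the second equiintegrability condition. The only non-routine point, and the true content of the lemma, is the commutation of truncation with polarization, which converts the globally conserved but \emph{a priori} non-localised quantity $\int|Du_n|^p\,dx$ into an identity localised to each sublevel set $\{u_n\leq h\}$; the subsequent coupling between $h$ and $R$ is standard provided $h$ is chosen before $R$.
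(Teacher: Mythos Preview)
Your proof is correct and takes a genuinely different route from the paper's.

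The paper decomposes according to the values of $|Du_n|$, splitting off $\{|Du_n|<\delta\}$ and $\{|Du_n|>h\}$ (both controlled by equimeasurability of $|Du_n|$ with $|Du_0|$) and then handling the intermediate range $\{\delta\leq |Du_n|\leq h\}\setminus B_R$ by rearranging the set $B_R$ back to ``$u_0$-coordinates'' and comparing $\int_F|Du_0|^p$ with $\int_F|u_0|^q$ via a quantitative constant $C_r$ obtained from a minimisation over sets of measure at least $\mu$. This works, but the construction of $C_r$ and the estimate that follows are somewhat technical.

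You instead decompose according to the values of $u_n$ and exploit the lattice identity $(\min(u,h))^H=\min(u^H,h)$, so that the truncations $v_n=\min(u_n,h)$ are themselves the iterated polarizations of $v_0=\min(u_0,h)$ along the same half-spaces. This immediately gives $\int_{\{u_n\leq h\}}|Du_n|^p=\int_{\{u_0\leq h\}}|Du_0|^p$ \emph{for every} $n$, a pointwise-in-$h$ conservation law that the paper does not isolate. The remainder over $\{u_n>h\}\setminus B_R$ is then handled by Chebyshev plus the already-proved uniform absolute continuity. Your argument is shorter, avoids the auxiliary constant $C_r$ and the case distinction on $\mu$, and makes the mechanism transparent: polarization permutes values of $u$ and $|Du|$ \emph{jointly}, so any integral localised to a sublevel set of $u$ is conserved. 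The paper's approach, on the other hand, would adapt more readily if one only knew equimeasurability of $|Du_n|$ without the joint structure (which is not the situation here).

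One minor remark: when you invoke Lemma~\ref{lem:polintinv} for $v_n$, note that $v_0\in W^{1,1}_\loc$ with $Dv_0\in L^p$, so the hypotheses are met; and the two-sided truncation needed for sign-changing $u_0$ indeed commutes with polarization by the dual distributive law $\min(\max(a,c),\max(b,c))=\max(\min(a,b),c)$, so your parenthetical reduction is legitimate.
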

\begin{rem}
This also closes a gap in the proof of Theorem 8.2 in \cite{BroSo00} for $p=1$, where $1$-equiintegrability of the gradients has to be shown but the proof of
the second part of \eqref{requiint} is missing.
\end{rem}
\begin{proof}
Let $E\subset \RR^N$ be measurable and $n\in \NN$.
Since $u_n$ and $\abs{D u_n}$ are obtained by repeatedly simultaneously rearranging $u_0$ and $\abs{D u_0}$, respectively, 
there exist rearrangements $E_n$ of $E$ with $\abs{E_n}=\abs{E}$ such that
$$
	\int_E \abs{u_n}^{q}=\int_{E_n} \abs{u_0}^{q}~~\text{and}~~\int_E \abs{D u_n}^p=\int_{E_n} \abs{D u_0}^p.
$$
Moreover, if $E=u_n^{-1}(A)$ or $E=(\abs{D u_n})^{-1}(A)$ for some set $A\subset \RR$, then $E_n$ can be chosen as
$E_n=u^{-1}(A)$ or $E=(\abs{D u_n})^{-1}(A)$, respectively.
In particular, 
$$
	\int_{\{\abs{D u_n}< \delta\}} \abs{D u_n}^p\,dx=
	\int_{\{\abs{D u}< \delta\}} \abs{D u}^p\,dx\underset{\delta\to 0^+}{\To}0
$$
and
$$
	\int_{\{\abs{D u_n}>h\}} \abs{D u_n}^p\,dx=
	\int_{\{\abs{D u}>h\}} \abs{D u}^p\,dx\underset{h\to\infty}{\To}0
$$
uniformly in $n$.
It thus suffices so show that for every $0<\delta<h<\infty$,
\begin{align}\label{pgei-1}
	\sup_n\int_{\{\delta\leq  \abs{D u_n}\leq h\}\setminus B_R} \abs{D u_n}^p\,dx
	\underset{R\to\infty}{\To}0.
\end{align}
If $E_n=E_n(R)$ denotes the set associated to $E:=B_R$ as above, we have that
\begin{align}\label{pgei-2}
	\int_{\{\delta\leq \abs{D u_n}\leq h\}\setminus B_R} \abs{D u_n}^p\,dx
	=
	\int_{\{\delta\leq \abs{D u}\leq h\}\setminus E_n(R)} \abs{D u}^p\,dx.
\end{align}
Suppose now that there exists an $r_0>0$ such that
$$
	0<\mu:=\frac{1}{2}\limsup_{R\to\infty}\sup_n \abs{\big(\{\delta\leq \abs{D u}\leq h\}\setminus E_n(R)\big)\cap B_{r_0}}.
$$
(Otherwise, \eqref{pgei-1} immediately follows from \eqref{pgei-2}, because the constant sequence $D u$ is $p$-equiintegrable.)
In particular, the measure of
$$
	U_{r,\delta,h}:=B_r\cap \{\delta\leq \abs{D u}\leq h\}
$$
is greater than $\mu$ for every $r\geq r_0$.
We claim that for every $r\geq r_0$, there exists a constant $C_r=C_r(u,\mu,p,q,\delta,h)>0$ such that
\begin{align}\label{pgei-3}
 \int_F\abs{D u}^p\leq C_r \int_F \abs{u}^{q}
	~~\text{for every}~F\subset U_{r,\delta,h}~\text{with}~\abs{F}\geq \mu.
\end{align}
For a proof, observe that 
\begin{align}\label{pgei-4}
	0<\frac{1}{C_r}:=\frac{1}{\abs{U_{r,\delta,h}} h^{p}} \inf_F \int_F \abs{u}^{q}
	\leq \inf_F \frac{\int_F \abs{u}^{q}}{\int_F \abs{D u}^p}<\infty
\end{align}
Here, the first infimum, taken over $F\subset U_{r,\delta,h}$ with $\abs{F}\geq \mu$,
is indeed attained and thus positive:
Cleary, $U_{r,\delta,h}\subset \{\abs{u}>0\}$, at least up to a set of measure zero (recall that $\abs{\{v=0\}\setminus \{D v=0\}}=0$ for any $v\in W^{1,1}_\loc$).
Moreover, $\abs{U_{r,\delta,h}\cap \{0<\abs{u}\}}> \mu$ and  $\abs{U_{r,\delta,h}\cap \{0<\abs{u}<s\}}\to 0$ as $s\to 0^+$, whence
there exists a level $s_0\in (0,\infty)$ such that
$$
	\abs{U_{r,\delta,h}\cap \{0<\abs{u}<s_0\}}\leq \mu~~\text{and}~~
	\abs{U_{r,\delta,h}\cap \{0<\abs{u}\leq s_0\}}\geq \mu.
$$	
With this choice of $s_0$, we have that $\inf_F \int_F \abs{u}^{q}=\int_{F_0} \abs{u}^{q}$ for any measurable $F_0$ such that $\abs{F_0}=\mu$ and
$U_{r,\delta,h}\cap \{0<\abs{u}<s_0\}\subset F_0 \subset U_{r,\delta,h}\cap \{0<\abs{u}\leq s_0\}$.

As a consequence of \eqref{pgei-2}, \eqref{pgei-3} and the definition of $E_n(R)$,
\begin{align*}
	&\int_{\{\delta\leq \abs{D u_n}\leq h\}\setminus B_R} \abs{D u_n}^p\\
	&\leq
	\int_{\{\delta\leq \abs{D u}\leq h\}\setminus E_n(R)\setminus B_r} \abs{D u}^p
	+C_r \int_{\{\delta\leq \abs{D u}\leq h\}\setminus E_n(R)\cap B_r} \abs{u}^{q}\\
	&\leq 
	\int_{\RR^N\setminus  B_r} \abs{D u}^p\,dx
	+C_r \int_{\{\delta\leq \abs{D u}\leq h\}\setminus E_n(R)} \abs{u}^{q}\\
	&=	\int_{\RR^N\setminus  B_r} \abs{D u}^p\,dx
	+C_r \int_{\{\delta\leq \abs{D u_n}\leq h\}\setminus B_R} \abs{u_n}^{q}\\
	&\leq	\int_{\RR^N\setminus  B_r} \abs{D u}^p\,dx
	+C_r \int_{\RR^N\setminus B_R} \abs{u_n}^{q}
\end{align*}
Since $(u_n)$ is $q$-equiintegrable, $\sup_n C_r \int_{\RR^N\setminus B_R} \abs{D u_n}^{q}\to 0$ as $R\to\infty$ for every fixed $r$,
and $\int_{\RR^N\setminus  B_r} \abs{D u}^p\to 0$ as $r\to\infty$.
Combined, this implies \eqref{pgei-1}.
\end{proof}

To get full compactness of $D u_n$ in $L^p$, additional properties of the initial function $u_0$ are needed. For minimizers of an integral functional,
the following theorem on weak-strong convergence turns out to be useful. It relies on equiintegrability to replace otherwise necessary 
coercivity assumptions on the integrand of the functional.

\subsection{An adapted theorem on weak-strong convergence}

Let $1<p<\infty$ and let $\Omega\subset \RR^N$ be open (possibly unbounded) and smooth enough such that $W^{1,p}(\Omega)$ is continuously embedded in $L^{p^*}(\Omega)$.
Here $p^*=\frac{pN}{N-p}$ is the critical Sobolev exponent if $p<N$ (otherwise, $p^*\in (p,\infty)$ can be chosen arbitrarily but fixed). For $u\in W^{1,p}(\Omega)$, we consider the functional
defined by
$$
 	J(u):=\int_\Omega j(x,u,D u)\,dx\in \RR\cup\{+\infty\},
$$
where
\begin{align}
\label{j0}
	&j:\Omega\times \RR\times \RR^N\to \RR~~\text{is a Carath\'eodory function\footnotemark.}
\end{align}%
\footnotetext{i.e., $j=f(x,\mu,\xi)$ is measurable in $x\in\Omega$ for every $(\mu,\xi)$ and continuous in $(\mu,\xi)\in\RR\times \RR^N$ for a.e.~$x$}%
In addition, we need that
\begin{align}
	\label{j1}
	&j(x,\mu,\xi)\geq -C\big(\abs{\xi}^p+\abs{\mu}^{p^*}+\abs{\mu}^{p}\big)-\abs{h(x)},\\
	\label{j2}
	&j(x,\mu,\cdot)~~\text{is strictly convex},
\end{align}
with a constant $C>0$ and $h\in L^1(\Omega)$, for a.e.~$x\in \Omega$, every $\mu\in \RR$ and every $\xi\in \RR^N$. 
Then we have the following theorem, essentially a variant of the results of {\sc Visintin} \cite{Vi84a}, {\sc Evans \& Gariepy} \cite{EvGa87a}, {\sc Zhang} \cite{Zha89a} and {\sc Sychev} \cite{Sy98a,Sy99a} for the scalar case on unbounded domains (for related results on unbounded domains also see \cite{Kroe09cp}).
\begin{thm}\label{thm:weakstrong}
Assume that \eqref{j0}--\eqref{j2} hold, let $(u_n)\subset W^{1,p}(\Omega)$ be a bounded sequence weakly converging to a function $u\in W^{1,p}(\Omega)$ and 
suppose that
\begin{align}\label{thsws-0}
	\text{$(u_n)$ is $p$- and $p^*$-equiintegrable and $(D u_n)$ is $p$-equiintegrable}. 
\end{align}
Then $\liminf J(u_n)\geq J(u)$.
If, in addition, $\limsup J(u_n)\leq J(u)<\infty$, then $u_n\to u$ strongly in $W^{1,1}_\loc(\Omega)$, and as a consequence of \eqref{thsws-0}, $D u_n\to D u$ in $L^p(\Omega;\RR^N)$ and $u_n\to u$ in $(L^{p^*}\cap L^p)(\Omega)$. 
\end{thm}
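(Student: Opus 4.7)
The plan is to reduce to pointwise convergence on bounded subdomains and then, for the equality conclusion, to extract a.e.~convergence of the gradients from strict convexity combined with the energy identity $J(u_n)\to J(u)$.

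As a first step, I would pass to a subsequence along which $u_n\to u$ a.e.~in $\Omega$: on each bounded cylinder $\Omega\cap B_R$ the compact Rellich--Kondrachov embedding yields $L^p$-convergence along a subsequence, and a diagonal extraction furnishes a single subsequence that works on all of $\Omega$. Combined with $p$- and $p^*$-equiintegrability, Vitali's convergence theorem upgrades this to $u_n\to u$ strongly in $L^p(\Omega)\cap L^{p^*}(\Omega)$. These strong modes of convergence for the ``state'' variable are what will allow the integrand $j(x,u_n,\cdot)$ to be effectively frozen to $j(x,u,\cdot)$ in the arguments that follow. Likewise, once a.e.~convergence of $Du_n$ is obtained, $p$-equiintegrability will give $Du_n\to Du$ in $L^p$ by Vitali.

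For the liminf inequality, I would apply a classical Ioffe-type lower semicontinuity result on each $\Omega\cap B_R$, whose hypotheses are met by \eqref{j0}, \eqref{j2} and, after the strong convergences above, the fact that \eqref{j1} dominates the negative part of $j(x,u_n,Du_n)$ by a sequence that is convergent in $L^1(\Omega\cap B_R)$. For the tail $\Omega\setminus B_R$, \eqref{j1} together with \eqref{thsws-0} and $h\in L^1$ yields
\begin{equation*}
	\int_{\Omega\setminus B_R}j(x,u_n,Du_n)\,dx\ge -\eps_R,\quad \eps_R\downarrow 0\text{ as }R\to\infty,
\end{equation*}
uniformly in $n$. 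Letting $R\to\infty$ delivers $\liminf J(u_n)\ge J(u)$; the case $J(u)=+\infty$ is recovered by first truncating $j$ from above to $j_M:=\min\{j,M\}$, passing to the liminf, and then sending $M\to\infty$ via monotone convergence against the equiintegrable lower bound.

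The main obstacle is the strong-convergence conclusion under the energy identity $J(u_n)\to J(u)<\infty$, and here the approach would be via Young measures. Since $(Du_n)$ is bounded and $p$-equiintegrable, a subsequence generates a Young measure $(\nu_x)_{x\in\Omega}$ on $\RR^N$ with barycentre $\int\xi\,d\nu_x(\xi)=Du(x)$ a.e. Using $u_n\to u$ a.e.~to swap the state argument of $j$ in the limit (a standard Carath\'eodory/Scorza-Dragoni device, justified by \eqref{j1} and the equiintegrability), Balder's lower semicontinuity theorem for Young measures gives
\begin{equation*}
	J(u)=\lim J(u_n)\ge \int_\Omega\int_{\RR^N} j(x,u(x),\xi)\,d\nu_x(\xi)\,dx\ge J(u),
\end{equation*}
so both inequalities must be equalities. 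The second one is Jensen's inequality for the convex map $\xi\mapsto j(x,u(x),\xi)$, and the strict convexity in \eqref{j2} forces $\nu_x$ to be a Dirac mass at its barycentre, i.e.~$\nu_x=\delta_{Du(x)}$ for a.e.~$x$. A Dirac Young measure is equivalent to convergence in measure of the generating sequence, so $Du_n\to Du$ in measure and, along a subsequence, a.e.~in $\Omega$. Vitali together with $p$-equiintegrability then gives $Du_n\to Du$ in $L^p(\Omega;\RR^N)$, and local boundedness combined with a.e.~convergence yields $u_n\to u$ in $W^{1,1}_\loc(\Omega)$. The hardest technical point is justifying the state-swap inside Balder's theorem without an upper growth bound on $j$, which is why freezing $u_n$ only in the \emph{limit} (not uniformly in $n$) is essential. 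A standard subsequence argument promotes all these convergences from a subsequence to the full sequence.
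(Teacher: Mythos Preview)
Your proposal is correct and follows essentially the same Young-measure strategy as the paper: pass to a subsequence, identify the Young measure generated by the gradients, obtain the liminf inequality, and then use Jensen's inequality together with strict convexity \eqref{j2} to force $\nu_x=\delta_{Du(x)}$, whence convergence in measure and (via equiintegrability and Vitali) the asserted strong convergences.

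The paper streamlines two of the points you flag as delicate. First, rather than ``swapping'' $u_n$ for $u$ in the integrand, it observes directly that the \emph{joint} Young measure generated by $(u_n,Du_n)$ is $\delta_{u(x)}\otimes\nu_x$ (since $u_n\to u$ in $L^p_\loc$ generates a Dirac in the first component); this makes the passage to $j(x,u(x),\xi)$ in the limit automatic and avoids any Scorza--Dragoni manoeuvre. Second, instead of invoking Balder's theorem as a black box, the paper works with the truncated integrands $j^{[h]}(x,\mu,\xi):=\chi_{B_h(0)}(x)\min\{h,j(x,\mu,\xi)\}$, for which \eqref{j1} and \eqref{thsws-0} yield $1$-equiintegrability of $j^{[h]}(x,u_n,Du_n)$ directly; the Young-measure limit is then exact for each $h$, and $h\to\infty$ gives \eqref{tYm-1}. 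This also delivers the liminf inequality in one stroke, so your separate Ioffe-plus-tail argument for lower semicontinuity, while perfectly valid, becomes unnecessary.
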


%
The results cited above are not applicable in the situation of Theorem~\ref{thm:weakstrong}, in particular because of the unbounded domain and the extremely weak lower bound. Nevertheless, it is not difficult to obtain a proof following the approach of \cite{Sy98a,Sy99a} or \cite{Kroe09cp} based on the theory of Young measures. In fact, since we only consider the scalar case (in particular, quasiconvexity reduces to convexity) and exploit the equiintegrability of the sequence considered, it can be simplified significantly. 
Our starting point is the fundamental theorem for Young measures:
\begin{thm}[\cite{Ba89a,Mue99a}, e.g.]\label{thm:Ym}
Let $w_n:\Omega\to \RR^m$ be a sequence of measurable functions. Then there exists a subsequence $(w_{k(n)})$ and
a family $\nu=(\nu_x)_{x\in \Omega}$ of non-negative Radon measures on $\RR^m$, 
weak*-measurable\footnote{i.e., $x\mapsto \int _{\RR^m}f(\mu) d\nu_x(\mu)$ is measurable for every $f\in C_0(\RR^m)$} \,in $x$, such that the following holds:
\begin{myenum}
\item[(i)]
	$\nu_x(\RR^m) \leq 1$ for a.e.~$x\in \Omega$.
\item[(ii)]
	If $\lim_{h\to\infty}\sup_{n\in \NN} 
	\measN{\{\nabs{w_{k(n)}}\geq h\}\cap \Omega\cap B_R(0)}=0$ for every $R>0$,\\
	then $\nu_x(\RR^m) = 1$ for a.e.~$x\in \Omega$.
\item[(iii)]	
	For every Carath\'eodory function $f:\Omega\times \RR^m\to \RR$ such that\\
	$f(\cdot,w_{k(n)})$ is $1$-equiintegrable\footnotemark, 
	we have that\\
	$\displaystyle{\int_\Omega f(x,w_{k(n)}(x))\,dx\underset{n\to \infty}{\To}
	\int_\Omega \int_{\RR^m} f(x,\mu)\,d\nu_x(\mu)dx}$.
\end{myenum}
Moreover,
$w_n$ converges locally in measure to some function $w$ if and only if $\nu_x$ is supported on the singleton $\{w(x)\}$ for a.e.~$x$.%
\footnotetext{Note that $1$-equiintegrablility is equivalent to weak relative compactness in $L^1$ by the de la Vall\'e-Poussin criterion.}%
\end{thm}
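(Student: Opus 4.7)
The plan is to pass to Young measures along the joint sequence $w_n:=(u_n,Du_n)$ with values in $\RR\times \RR^N$. By Theorem \ref{thm:Ym} a subsequence generates a Young measure $(\nu_x)_{x\in\Omega}$; boundedness of $(u_n)$ in $L^p_\loc$ together with $p$-equiintegrability of $(Du_n)$ yields the tail condition in part (ii) on every ball, so $\nu_x$ is a probability measure a.e. Rellich compactness gives $u_n\to u$ in $L^p_\loc$, hence locally in measure, so the last clause of Theorem \ref{thm:Ym} forces the first marginal of $\nu_x$ to be $\delta_{u(x)}$, and I disintegrate $\nu_x=\delta_{u(x)}\otimes \sigma_x$ for a probability measure $\sigma_x$ on $\RR^N$. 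Applying Theorem \ref{thm:Ym}(iii) to the test integrand $(x,\mu,\xi)\mapsto \varphi(x)\xi$ with $\varphi\in C_c(\Omega)$ (legal since $(|Du_n|)$ is $1$-equiintegrable on compacts) and comparing with the weak limit $Du$ identifies the barycenter: $\int_{\RR^N}\xi\,d\sigma_x(\xi)=Du(x)$ a.e.

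For the lower-semicontinuity inequality, split $j=(j+c)-c$ with $c(x,\mu,\xi):=C(|\xi|^p+|\mu|^{p^*}+|\mu|^p)+|h(x)|$, so that $j+c\geq 0$ by \eqref{j1}. The Fatou-type form of the Young measure lemma (which for non-negative Carathéodory integrands on unbounded $\Omega$ is recovered from Theorem \ref{thm:Ym}(iii) by truncation and monotone convergence on an exhaustion by balls) gives
$$\liminf_n\int_\Omega(j+c)(x,u_n,Du_n)\,dx \ \geq\ \int_\Omega\int(j+c)(x,\mu,\xi)\,d\nu_x(\mu,\xi)\,dx.$$
The full $p$- and $p^*$-equiintegrability of \eqref{thsws-0}, combined with $|h|\in L^1$, makes $c(x,u_n,Du_n)$ itself $1$-equiintegrable on $\Omega$, so Theorem \ref{thm:Ym}(iii) gives a \emph{genuine limit}, not merely a $\limsup$, which may therefore be subtracted. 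This leaves $\liminf_n J(u_n)\geq \int\int j(x,\mu,\xi)\,d\nu_x\,dx=\int_\Omega\int j(x,u(x),\xi)\,d\sigma_x(\xi)\,dx$, and Jensen's inequality applied to the convex map $\xi\mapsto j(x,u(x),\xi)$ of \eqref{j2}, together with the barycenter identity, bounds the right-hand side below by $J(u)$.

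Under the additional assumption $\limsup J(u_n)\leq J(u)<\infty$, equality propagates through Jensen to $\int j(x,u(x),\xi)\,d\sigma_x(\xi)=j(x,u(x),Du(x))$ a.e., and the \emph{strict} convexity in \eqref{j2} forces $\sigma_x=\delta_{Du(x)}$. Thus $\nu_x=\delta_{(u(x),Du(x))}$ is a Dirac mass everywhere, and the final clause of Theorem \ref{thm:Ym} delivers $(u_n,Du_n)\to(u,Du)$ locally in measure. Vitali's convergence theorem, using this plus the $p$-equiintegrability of $Du_n$ and the $p$- and $p^*$-equiintegrability of $u_n$ (both the uniform absolute continuity and the tail part of \eqref{requiint} being essential for the unbounded domain), then upgrades the local convergence in measure to $Du_n\to Du$ in $L^p(\Omega;\RR^N)$ and $u_n\to u$ in $L^p(\Omega)\cap L^{p^*}(\Omega)$; $W^{1,1}_\loc$ convergence is immediate.

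I expect the principal difficulty to lie exactly in reconciling the extremely weak lower bound \eqref{j1} with the unboundedness of $\Omega$: without hypothesis \eqref{thsws-0} the correction terms in the Fatou--Young step would only be bounded, not convergent, and no inequality about $J$ itself could be extracted. The whole construction is engineered so that the assumed equiintegrability supplies a genuine limit for $c(x,u_n,Du_n)$, which substitutes precisely for the coercivity that one would otherwise need on $j$.
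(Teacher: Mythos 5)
There is a fundamental mismatch between what you were asked to prove and what you wrote. The statement in question is the fundamental theorem for Young measures itself (existence of the weak*-measurable family $(\nu_x)_{x\in\Omega}$, properties (i)--(iii), and the characterization of local convergence in measure by $\nu_x$ being a Dirac mass); in the paper this is a cited classical result (Ball, M\"uller), not something proved there. Your proposal does not prove it: the very first step is ``By Theorem~\ref{thm:Ym} a subsequence generates a Young measure'', i.e.\ you invoke the statement to be proved as a black box, and everything that follows (identification of the barycenter, the splitting $j=(j+c)-c$, Jensen's inequality with strict convexity, Vitali's theorem) is an argument for Theorem~\ref{thm:weakstrong}, the weak--strong convergence theorem, not for the Young measure theorem. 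As a proof of the statement in question the attempt is therefore circular and, in substance, missing. A genuine proof would have to construct $(\nu_x)$: regard $x\mapsto\delta_{w_n(x)}$ as elements of the unit ball of $L^\infty_{w*}(\Omega;\mathcal{M}(\RR^m))$, the dual of the separable space $L^1(\Omega;C_0(\RR^m))$, extract a weak* convergent subsequence by Banach--Alaoglu and identify the limit with a weak*-measurable family of non-negative measures satisfying (i); derive (ii) from the stated tightness condition on balls $B_R$ (no mass of $w_{k(n)}$ escapes to infinity in the target space); obtain (iii) first for integrands of the form $\varphi(x)\psi(\mu)$ with $\psi\in C_0$, then for general Carath\'eodory $f$ by truncation and the assumed $1$-equiintegrability; and prove the convergence-in-measure equivalence by testing with $f(x,\mu)=\min\{1,\nabs{\mu-w(x)}\}$ on bounded subsets of $\Omega$. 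None of these steps appears in your write-up.

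For what it is worth, read as a blind proof of Theorem~\ref{thm:weakstrong} your argument is essentially the paper's own: the paper truncates the integrand ($j^{[h]}=\chi_{B_h}\min\{h,j\}$) to obtain $1$-equiintegrability and then lets $h\to\infty$, whereas you add the lower-bound term $c(x,\mu,\xi)=C(\abs{\xi}^p+\abs{\mu}^{p^*}+\abs{\mu}^p)+\abs{h(x)}$ and subtract its genuine limit; both routes use the product structure $\delta_{u(x)}\otimes\nu_x$, Jensen's inequality, strict convexity forcing $\nu_x=\delta_{Du(x)}$, and equiintegrability to upgrade local convergence in measure. But that theorem was not the statement you were asked to prove.
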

As a consequence of (iii), $\nu$ is a.e.~uniquely determined by $(w_{k(n)})$. It is called the Young measure generated by $w_{k(n)}$.
\begin{rem}
If $w_n\to w$ weakly in $L^p_\loc$, then
$\int_{\RR^m} \xi d_{\nu_x}(\xi)=w(x)$ as a consequence of (iii). 
\end{rem}
\begin{proof}[Proof of Theorem~\ref{thm:weakstrong}]
We focus on the proof of second part of the assertion; the first part on weak lower semicontiunity (which is well known anyway, cf.~\cite{Io77a}) is obtained as a byproduct. It suffices to show that every subsequence of $u_n$ has another subsequence that converges to $u$ in $W^{1,1}_\loc(\Omega)$.
Hence, we may assume w.l.o.g.~that $u_n\to u$ in $L^p_\loc(\Omega)$ and that $(D u_n)$ generates a Young measure $\nu_x$, which for a.e.~$x$ is a probability measure on $\RR^N$ by Theorem~\ref{thm:Ym} (ii). 
Since $u_n\to u$ in $L^p_\loc$, $(u_n)$ generates the Young measure $\delta_{u(x)}$, the Dirac mass concentrated at $u(x)$. As a consequence,
the Young measure generated by $w_n:=(u_n,D u_n)$ is given by $\delta_{u(x)}\otimes \nu_x$.
In the following, for $h>0$ consider the truncated integrands
$$
	j^{[h]}(x,\mu\,\xi):=\chi_{B_h(0)}(x)\min\{h,j(x,\mu,\xi)\},~~x\in\Omega,~\mu\in \RR,~\xi\in\RR^N,
$$
with $\chi_{B_h(0)}$ denoting the characteristic function of the ball given in the index.
By \eqref{j1}, \eqref{thsws-0} implies $1$-equiintegrability of $j^{[h]}(x,u_n,D u_n)$, and Theorem~\ref{thm:Ym} (iii) yields that
$$
	J(u_n)\geq \int_\Omega j^{[h]}(x,u_n,D u_n)\,dx
	\underset{n\to\infty}{\To}
	\int_\Omega \int_{\RR^N} j^{[h]}(x,u(x),\xi)\,d\nu_x(\xi) dx
$$
In the limit $h\to \infty$, this entails that
\begin{align}\label{tYm-1}
	\liminf_n J(u_n)\geq 
	\int_\Omega \int_{\RR^N} j(x,u(x),\xi)\,d\nu_x(\xi) dx.
\end{align}
On the other hand, by the convexity of $j$ in its last variable, Jensen's inequality yields that
\begin{align}\label{tYm-2}
	\int_{\RR^N} j(x,u(x),\xi)\,d\nu_x(\xi)
	\geq 
	j\Big(x,u(x),\textstyle{\int_{\RR^N}} \xi d\nu_x(\xi)\Big)
	=j\big(x,u(x),D u(x)\big) 
\end{align}
for a.e.~$x$. 
Combined, \eqref{tYm-1} and \eqref{tYm-2} imply that $\liminf J(u_n)\geq J(u)$, and by assumption, we actually have equality. Hence, Jensen's inequality \eqref{tYm-2} also holds with equality for a.e.~$x$. Due to the strict convexity assumed in \eqref{j2}, the latter is the case if 
and only if $\nu_x$ is a Dirac mass, i.e., $\nu_x=\delta_{D u(x)}$.
In particular, $D u_n\to D u$ locally in measure, and since $u_n\to u$ in $L^p_\loc$ and $(D u_n)$ is $1$-equiintegrable, this entails that
$u_n\to u$ in $W^{1,1}_\loc$ as claimed.
\end{proof}
\begin{rem}
The method used in the proof of Theorem~\ref{thm:weakstrong} works equally well in the fully coupled vector case, i.e., for functionals of the form $\int j(x,U,DU)$ with vector-valued $U$. In this setting, the assumption of strict convexity in the gradient variable can even be replaced by a suitable notion of strong quasi-convexity, at least if $j$ also satisfies a growth condition. However, we do not know of any reasonably general assumptions on $j$ that guarantee a rearrangement inequality with respect to (component-wise) polarization or Schwarz symmetrization 
for such integrals, which prevents the kind of application we have in mind here.
\end{rem}

\subsection{Symmetry of minimizers}

We consider functionals of the form
$$
	E(U):=\sum_{i=1}^m J_i(u_i)-K(U),~~E:S\to \RR,~~J_i(u_i):=\int_{\RR^N} j_i(u_i,\abs{D u_i})\,dx,
$$
where $U=(u_1,\ldots,u_m)\in S$, constrained to a set $S\subset W^{1,p}_+(\RR^N)^m$ such that
$$
	\text{$S$ is closed in $(L^p\cap L^{p^*})(\RR^N)^m$ and invariant under polarizations,}
$$
and thus, by Theorem~\ref{thm:BroSo}, also invariant under Schwarz rearrangement.	
In addition, we assume that
for each $i=1,\ldots,m$,
\begin{align}
	&j_i:\RR\times \RR_+\to\RR~~\text{is continuous},\label{ji0}\\
	&j_i(s,t)\geq -C(\abs{s}^{p^*}+\abs{s}^p+t^p),\label{ji1}\\
	&j_i(s,\cdot)~~\text{is non-decreasing and strictly convex,}\label{ji2}
\end{align}
for every $s\in\RR$ and $t\in \RR_+$, with constants $C>0$ and $1<p<\infty$, and
\begin{align}
	\label{K0}
	&K:S\to \RR~~\text{is continuous with respect to the topology of $(L^{p^*}\cap L^p)^m$,}\\
	\label{K1}
	&K(U)\leq K(U^*)~~\text{for every $U\in S$.}
\end{align}
Here, $p^*:=\frac{pN}{N-p}$ if $p<N$, while $p^*\in [p,\infty)$ can be chosen arbitrarily (but fixed) if $p\geq N$.
We obtain the following result related to the symmetry of functions $U$ satisfying the generalized Polya-Szeg\"o inequality $E(U^*)\leq E(U)$ with equality:
\begin{thm}\label{thm:minsym}
Assume that $1<p<\infty$ and that \eqref{ji0}--\eqref{K1} hold. Moreover, suppose that there is a function $U=(u_1,\ldots,u_m)\in S$ 
such that $E(U)\leq E(U^*)<\infty$. 
Then $J_i(u_i)=J_i(u_i^*)$ and $\norm{D u_i}_{L^p}=\norm{D u_i^*}_{L^p}$ for $i=1,\ldots,m$.
\end{thm}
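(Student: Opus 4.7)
\emph{Proof plan.}
The idea is to approximate $U^*$ by iterated polarizations of $U$ and apply Theorem~\ref{thm:weakstrong} componentwise. Since each $u_i\in W^{1,p}(\RR^N)$ embeds into $L^p\cap L^{p^*}(\RR^N)$, Theorem~\ref{thm:BroSo} together with Remark~\ref{rem:polandsymm} provides a single sequence of half-spaces $H_n$ (independent of $i$) such that the iterated polarizations $U_n=(u_{n,1},\ldots,u_{n,m})$ of $U$ satisfy $u_{n,i}\to u_i^*$ in $L^p\cap L^{p^*}(\RR^N)$ for every $i$. Invariance of $S$ under polarization gives $U_n\in S$, while Lemma~\ref{lem:polintinv}, applied once with $f=j_i$ and once with $f(s,t)=t^p$, yields
$$
    J_i(u_{n,i})=J_i(u_i)
    \quad\text{and}\quad
    \norm{D u_{n,i}}_{L^p}=\norm{D u_i}_{L^p}
    \quad\text{for all $n$ and $i$.}
$$

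The $L^p\cap L^{p^*}$-convergence makes $(u_{n,i})$ both $p$- and $p^*$-equiintegrable, so Lemma~\ref{lem:polgradequiint} (with $q=p^*$) produces $p$-equiintegrability of $(D u_{n,i})$. Boundedness in $W^{1,p}$, combined with Rellich compactness and the identification of the $L^p$-limit as $u_i^*$, forces $u_{n,i}\rightharpoonup u_i^*$ weakly in $W^{1,p}$. The lower-semicontinuity part of Theorem~\ref{thm:weakstrong} applied to $j_i$ then gives
$$
    J_i(u_i^*)\leq \liminf_n J_i(u_{n,i})=J_i(u_i),\quad i=1,\ldots,m.
$$

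For the reverse inequality I exploit the hypotheses on $E$ and $K$: $E(U)\leq E(U^*)$ rearranges to
$$
    \sum_i J_i(u_i)-\sum_i J_i(u_i^*)\leq K(U)-K(U^*)\leq 0
$$
by \eqref{K1}. Together with the componentwise bound above this forces $J_i(u_i)=J_i(u_i^*)$ for every $i$, and these values are finite since $E(U^*)<\infty$ and $K$ is real-valued. Hence $\limsup_n J_i(u_{n,i})=J_i(u_i^*)<\infty$, the second half of Theorem~\ref{thm:weakstrong} applies, and I obtain $D u_{n,i}\to D u_i^*$ strongly in $L^p$. Passing to the limit in the identity $\norm{D u_{n,i}}_{L^p}=\norm{D u_i}_{L^p}$ yields $\norm{D u_i}_{L^p}=\norm{D u_i^*}_{L^p}$.

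The step I expect to be most delicate is the double invocation of Theorem~\ref{thm:weakstrong}: one needs sufficient control on the gradients to run \emph{both} the lower-semicontinuity step and, after equality of $J_i$-values is established, the upgrade to strong $L^p$-convergence. This control is precisely what Lemma~\ref{lem:polgradequiint} provides for iterated polarizations, and its availability is what lets the argument go through even though no coercivity of $j_i$ in the $u_i$-variable is assumed. A minor point is using a common sequence of half-spaces that realises the Schwarz approximation in both $L^p$ and $L^{p^*}$ simultaneously, which is handled by Remark~\ref{rem:polandsymm}.
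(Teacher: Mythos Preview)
Your proof is correct and follows essentially the same route as the paper's own argument: approximate $U^*$ by iterated polarizations, use Lemma~\ref{lem:polintinv} for invariance of $J_i$ and of the $L^p$-norm of the gradient, Lemma~\ref{lem:polgradequiint} for $p$-equiintegrability of the gradients, and then Theorem~\ref{thm:weakstrong} twice (first for lower semicontinuity, then for strong convergence). Your deduction of $\sum_i J_i(u_i)\leq\sum_i J_i(u_i^*)$ directly from $E(U)\leq E(U^*)$ and \eqref{K1} is slightly more economical than the paper's version, which instead passes through $K(U_n)\to K(U^*)$ via \eqref{K0}; both reach the same conclusion.
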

\begin{cor}\label{cor:minsym}
If, in addition to the assumptions of Theorem~\ref{thm:minsym}, the set 
$$
	C_i^*:=\{D u_i^*=0\}\cap \{u_i^*\in (0,\esssup u_i)\}
$$ 
has measure zero 
(or, equivalently, $t\mapsto \abs{\{u_i>t\}}$ is absolutely continuous on $(0,\esssup u_i)$), then up to a translation,
$u_i=u_i^*$ and thus is radially symmetric and radially non-increasing. 
\end{cor}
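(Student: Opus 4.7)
The plan is to reduce this corollary to Theorem~\ref{thm:minsym} combined with the classical characterization by Brothers and Ziemer \cite{BroZie88a} of cases of equality in the standard Polya-Szeg\"o inequality \eqref{0.18}.

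First, I will observe that Theorem~\ref{thm:minsym} already delivers the key identity $\norm{D u_i}_{L^p(\RR^N)}=\norm{D u_i^*}_{L^p(\RR^N)}$ for every $i=1,\ldots,m$. Since each $u_i$ lies in $W^{1,p}_+(\RR^N)$ and $u_i^*$ is its Schwarz rearrangement, this is precisely equality in \eqref{0.18}, component-wise. No analytic work beyond Theorem~\ref{thm:minsym} is needed to reach this point; the preceding theorem has done all the heavy lifting in matching the nonlinear energies $J_i$.

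Next, I will invoke the Brothers-Ziemer theorem: for a non-negative $u\in W^{1,p}(\RR^N)$, equality $\norm{D u}_{L^p}=\norm{D u^*}_{L^p}$ together with the measure-zero condition $\abs{\{D u^*=0\}\cap \{u^*\in (0,\esssup u)\}}=0$ forces $u=u^*$ up to a translation. Applied component-wise to each $u_i$ (with the admissible translation possibly depending on $i$), this gives the conclusion that each $u_i$ agrees, after translation, with its Schwarz rearrangement, hence is radially symmetric and radially non-increasing about its own center.

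The parenthetical equivalence in the statement---between vanishing of $\abs{C_i^*}$ and absolute continuity of $t\mapsto \abs{\{u_i>t\}}$ on $(0,\esssup u_i)$---is a standard fact relying on the coarea formula applied to $u_i^*$ (which has the same distribution function as $u_i$), so it need not be re-proved here; a reference to \cite{BroZie88a} suffices. Since every ingredient is already available, the main obstacle was established in proving Theorem~\ref{thm:minsym}; the corollary itself is essentially a one-line transcription.
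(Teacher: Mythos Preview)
Your proposal is correct and follows exactly the paper's approach: invoke Theorem~\ref{thm:minsym} to obtain $\norm{D u_i}_{L^p}=\norm{D u_i^*}_{L^p}$, then apply the Brothers--Ziemer result \cite{BroZie88a} under the hypothesis $\abs{C_i^*}=0$ to conclude $u_i=u_i^*$ up to a translation. The only (harmless) imprecision is that the corollary is stated for a fixed index $i$ satisfying the measure-zero condition, not necessarily for all components.
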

\begin{rem}
In~\cite{HS1a, HS1b}, a related result was established, but \emph{assuming} that the functional has a weak-strong convergence property. 
\end{rem}
\begin{rem}
Clearly, every minimizer $U\in S$ satisfies $E(U)\leq E(U^*)$.
\end{rem}
\begin{proof}[Proof of Theorem~\ref{thm:minsym}]
Let $(H_n)$ be a sequence of half-spaces in $\RR^N$ containing the origin, and let $U^n=(u^n_1,\ldots,u^n_m)$ be the associated sequence of iterated polarizations of $U$, i.e.,
$U^0:=U$ and $U^{n}:=(U^{n-1})^{H_n}$.
Since $U\in (L^p\cap L^{p^*})^m$, we have that 
$U^n\to U^*$ in $(L^p\cap L^{p^*})^m$, at least for an appropriate choice of $(H_n)$, see~Theorem~\ref{thm:BroSo} and Remark~\ref{rem:polandsymm}.
In particular, 
\begin{align}\label{tms-1}
	K(U^n)\to K(U^*)
\end{align}	
by \eqref{K0}. 
Moreover, by Lemma~\ref{lem:polintinv}, $\int j_i(u_i^n,\abs{D u_i^n})=\int j_i(u_i,\abs{D u_i})$, $\int \abs{u_i^n}^p=\int \abs{u_i}^p$ and 
$\int \abs{D u_i^n}^p=\int \abs{D u_i}^p$,
whence $U^n$ is bounded in $(W^{1,p})^m$, $U^n\rightharpoonup U^*$ weakly in $(W^{1,p})^m$ and $U^*\in S$.
In addition, Lemma~\ref{lem:polgradequiint} yields that $D u_i^n$ is $p$-equiintegrable.
By the weak lower semicontinuity of $J_i$ along $u_i^n$ obtained in Theorem~\ref{thm:weakstrong}, we have 
\begin{align}\label{tms-2}
	J_i(u_i)=\lim_n J_i(u_i^n)\geq J_i(u_i^*),  
\end{align}
and by \eqref{K1}, this implies that $E(U)\geq E(U^*)$. Since the converse inequality holds by assumption, we even have that $E(U)=E(U^*)$ and $E(U^n)\to E(U^*)$.
In view of \eqref{tms-1} and \eqref{tms-2}, the latter is possible only if
\begin{align*}
	J_i(u_i)=\lim_n J_i(u_i^n)=J_i(u_i^*).  
\end{align*}
By Theorem~\ref{thm:weakstrong}, we conclude that
$u_i^n\to u_i^*$ strongly in $W^{1,p}$, and in particular,
$\norm{D u_i}_{L^p}=\lim_n\norm{D u_i^n}_{L^p}=\norm{D u_i^*}_{L^p}$.
\end{proof}
\begin{proof}[Proof of Corollary~\ref{cor:minsym}]
By the main result of \cite{BroZie88a}, $\norm{D u_i}_{L^p}=\norm{D u_i^*}_{L^p}$ implies the assertion.
\end{proof}

\section{Applications to minimization problems}\label{sec:app}
For $m\in \NN$, $1<p<\infty$ and $U=(u_{1},..., u_{m}) \in W^{1,p}(\mathbb{R}^{N})^m$, we study minimization problems of the following form:
\begin{eqnarray}\label{Mc}
M_{c}:=\inf\left\{ E(U) \mid U\in S_{c}\right\},
\end{eqnarray}
\begin{eqnarray*}
S_{c}:=\left\{ U=(u_{1},..., u_{m}) \in W^{1,p}(\mathbb{R}^{N})^{m}\,\left|\, \int |u_{i}|^{p} =c_{i},~1\leq i\leq m\right.\right\},
\end{eqnarray*}
where $c=(c_{1},...,c_{m})\in \RR^m$ is a prescribed vector with positive components and $E$ is the functional defined by
\begin{eqnarray*}
E(U)&=& E(u_{1},..., u_{m}):=\sum_{i=1}^{m}\int J_{i}(u_{i},|D u_{i}|)- \int F(|x|,u_{1}(x),..., u_{m}(x)) \\
&-& \int \int G(u_{1}(x),..., u_{m}(x)) V(|x-y|) G(u_{1}(y),..., u_{m}(y)) \: dx \: dy
\end{eqnarray*}
Here and below, integrals whose domain is not specified as a subscript are always taken over $\RR^N$.
\begin{rem}
Under suitable regularity assumptions on $J_{i},\: F$ and $ G$, minimizers of (\ref{Mc}) yield a nontrivial solutions of the quasilinear system of equations given by
\begin{equation} \label{EL}
\begin{aligned}
 \div \big( D_{\xi} J_{i}(u_{i},|D u_{i}|) \big)= & D_{s} J_{i}(u_{i},|D u_{i}|)
 + D{s_{i}} F(|x|,u_{1},..., u_{n})\\
 &+2(V \ast G) G_{s_{i}}(u_{1},..., u_{m}) + \lambda_{i} u_{i} |u_{i}|^{p-2},
\end{aligned}
\end{equation}
$1\leq i\leq m$, for some Lagrange multipliers $\lambda_{i} \in \mathbb{R}$.
\end{rem}
Now let us state our assumptions on  $J_{i},\: F$ and $ G$. 
Via the Sobolev embedding, the critical
exponent $p^*:=\frac{pN}{N-p}$ comes into play for $p<N$; if $p\geq N$, $p^*\in(p,\infty)$ can be chosen arbitrarily below. \\
\textbf{Assumptions on} $\mbf{J_{i}}$:\\
For $1\leq i\leq m$, $J_{i}: \mathbb{R}\times \mathbb{R}_{+}\longrightarrow \mathbb{R}_+$ is a continuous function such that: 
\begin{enumerate}
\item[\textbf{(J 0)}]
	$J_{i}(|s|, b) \leq J_{i}(s, b)$ for all $s\in \mathbb{R},\: b\in \mathbb{R}_{+}$;
\item[\textbf{(J 1)}]
	$\exists \;a_{1}>0$ s.~t.~$J_{i}(s, b)\geq  a_{1}b^{p}$ for any $1\leq i\leq m$, $s\in \mathbb{R}_{+},\: b\in \mathbb{R}_{+}$;
\item[\textbf{(J 2)}]
	$\forall \;1\leq i\leq m$, $J_{i}(s,.)$ is convex and non-decreasing for all $s\in \mathbb{R}_{+}$;
\end{enumerate}
\textbf{Assumptions on} $\mbf{F}$:\\
$F: \mathbb{R}_{+}\times \mathbb{R}^{m}\longrightarrow \mathbb{R}$ is a Carath\'eodory function, i.e.,
\begin{enumerate}
\item[(i)]
  $F(\cdot, s): \mathbb{R}_{+}\to \mathbb{R}$ is measurable in $\mathbb{R}_{+}\setminus \Gamma$ for all $s\in \RR^m$, where $\Gamma$ is a subset of $\mathbb{R}_{+}$
	having one dimensional measure zero, and
\item[(ii)]
  for every $r \in \mathbb{R}_{+}\setminus \Gamma$, the function
	$\mathbb{R}^m \to  \mathbb{R}$, $s=(s_1,\ldots,s_m)\mapsto F (r,s)$, is continuous. 
\end{enumerate}
In addition, we assume the following:
\begin{enumerate}
\item[\textbf{(F 0)}]
	$F(r, s_{1},...,s_{m})\leq F(r, |s_{1}|,...,|s_{m}|)$ for a.e.~$r\geq 0$ and every $s_{1},...,s_{m}\in \mathbb{R}$;
\item[\textbf{(F 1)}]
	for a.e.~$r\geq 0$ and every $s_{1},...,s_{m}\geq 0$,
 	$$0\leq F(r, s_{1},...,s_{m}) \leq K\Big( |s|^{p}+ \sum_{i=1}^{m} s_{i}^{l_{i}+p}\Big),~~s=(s_{1},...,s_{m}),$$ 
 	with positive constants $K$ and $0<l_{i}< \tfrac{p^{2}}{N}$;
\item[\textbf{(F 2)}]
	for every $\varepsilon >0$ there exist $R_{0}>0$ and $s_{0}>0$ such that\\
	$
		F(r, s_{1},...,s_{m})\leq \varepsilon |s|^{p}
	$
	for a.e.~$r\geq R_{0}$, $0\leq 	s_{1},...,s_{m} < s_{0}$;
\item[\textbf{(F 3)}]	$(t,y)\mapsto F(\frac{1}{t},y)$ is supermodular on $\RR_+\times \RR_+^m$, i.e.,
	for a.e.~$r\geq 0$, every $y\in \RR_+^m$, every $h,k\geq 0$, every $i\neq j$, $i,j=1,\ldots,m$, 
	and a.e.~$R\geq r$,
	$$F(r,y+h e_i +k e_j)+F(r,y)\geq F(r,y+h e_i)+F(r,y+k e_j)$$
	and
	$$F(r,y+h e_i)+F(R,y)\geq F(R,y+h e_i)+F(r,y),$$
	where $e_i$ denotes the $i$-th unit vector in $\RR^m$. 
\end{enumerate}
\textbf{Assumptions on $\mbf{G}$ and the  Coulomb type potential $\mbf{V}$}:\\
$ G :\mathbb{R}^{m}\longrightarrow \mathbb{R}_+$  is continuous and $ V: \mathbb{R}_+\longrightarrow \mathbb{R}_{+}$ is measurable such that: 
\begin{enumerate}
\item[\textbf{(G 0)}]
	$G(s_{1},...,s_{m})\leq G( |s_{1}|,...,|s_{m}|)$ for all $s_{1},...,s_{m}\in \mathbb{R}$;
\item[\textbf{(G 1)}]
	there exists a positive constant $K'$ such that
	$$0\leq G(s_{1},...,s_{m})\leq K' \sum_{i}^{m} s_{i}^{\mu_{i}}~~\text{for all $s_{1},...,s_{m}\geq 0$},$$
	where $p \tfrac{2q-1}{2q}< \mu_{i} < p^{*} \tfrac{2q-1}{2q}$ 
	and $1<q<\infty $ depends on $V$ as follows:
\item[\textbf{(G 2)}]
	$V: \mathbb{R}_{+}\longrightarrow \mathbb{R}_{+}$, $V(\abs{\cdot})\in L^q_w(\RR^N)$ with $N 
		 \Big(\frac{\mu_i}{p}-\frac{2q-1}{2q}\Big)<\frac{p}{2}$, $1\leq i\leq m$;
\item[\textbf{(G 3)}]
	$0\leq V(|x|)\leq V(|y|)$ \; $\forall |x|\geq |y|$;
\item[\textbf{(G 4)}] $G:\RR_+^m\to \RR_+$ is non-decreasing in each variable and supermodular, i.e., 
$$
	G(y+h e_i)\geq G(y),
$$
$$
	G(y+h e_i +k e_j)+G(y)\geq G(y+h e_i)+G(y+k e_j),
$$
for every $y\in \RR_+^m$, every $h,k\geq 0$ and every $i\neq j$, $i,j=1,\ldots,m$,
where $e_i$ denotes the $i$-th unit vector in $\RR^m$.
\end{enumerate}
Here, $L^q_w$ denotes the weak $L^q$ space consisting of all measurable functions
$u$ for which $\sup_{h>0}h^{q}\abs{\{\abs{u}\geq h\}}$ is finite.

Finally, we need an additional assumption to make up for the fact that our constraint set $S_c$ is not compact in the topology of $W^{1,p}(\RR^N)^m$ and to ensure that $M_c$ is attained. For this purpose,
we define
$$
	\tilde{M}_c:=\inf\left\{ E(u_1,\ldots,u_m) \left| 
	\begin{array}{ll}
	u_i\in W^{1,p}_+(\RR^N), \int \abs{u_i}^p\leq c_i, 
	i=1,\ldots,m
	\end{array} \right. \right\}
$$
consider some $c=(c_1,\ldots,c_m)\in (0,\infty)^m$ such that
\begin{enumerate}
\item[\textbf{(E 0)}]
	$\tilde{M}_c<\tilde{M}_d$ for every $d=(d_1,\ldots,d_m)\in (0,\infty)^m$ such that $d_i\leq c_i$ for every $i\in \{1,\ldots,m\}$
	and $d_k<c_k$ for one $k\in\{1,\ldots,m\}$.
\end{enumerate}
\begin{rem} Note that by definition, $c\mapsto \tilde{M}_c$ is non-increasing in every component of $c$.
The strict monotonicity at one $c$ required in \textbf{(E 0)} is essentially independent of the rest of our assumptions above. 
It is just some sufficient condition one can use to show that $M_c$ is attained and  it does not play a role in our symmetry results. 
\end{rem}

Under the assumptions on $J_i$, $F$, $G$ and $V$ listed above, we obtain
\begin{thm}\label{thm:app}
Let $c\in (0,\infty)^m$ and suppose that $M_c<\infty$. Then we have the following:
\begin{enumerate}
\item[(i)]
If \textbf{(E 0)} holds, 
$M_{c}=\inf\left\{ E(U); U\in S_{c}\right\}$ is attained in $S_c$, and at least one minimizer is radially symmetric and has non-negative components. 
\item[(ii)] 
Suppose that the convexity of $J_i(s,\cdot)$ required in \textbf{(J 2)} is strict for some $i\in \{1,\ldots,n\}$. Then
for every minimizer $U=(u_1,\ldots,u_m)\in (W^{1,p}_+)^m\cap S_c$ of \eqref{Mc}, 
we have that $\norm{Du_i}_{L^p}=\norm{Du_i^*}_{L^p}$. If, in addition,
\begin{align*} 
	\qquad C^*_i:=\{D u_i^*=0\}\cap \{u_i^*\in (0,\esssup u_i)\}\subset \RR^N~~\text{has measure zero},
\end{align*}	
then $u_i=u_i^{*}$ up to a translation. 
\end{enumerate}

\end{thm}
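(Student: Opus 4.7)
The plan is to handle the two parts separately: part (i) is a direct method argument combined with rearrangement, while part (ii) is essentially an application of Theorem~\ref{thm:minsym} to any minimizer.

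For part (i), I would start with a minimizing sequence $(U^n)\subset S_c$ and use the rearrangement inequalities to replace it by a Schwarz-symmetric one with non-negative components. Concretely: by Lemma~\ref{lem:polintinv} each $\int J_i(u_i^n,|Du_i^n|)$ is invariant under polarization, while the supermodularity assumption \textbf{(F 3)}, together with \textbf{(G 4)}, \textbf{(G 3)} and the approximation result Theorem~\ref{thm:BroSo} iterated along a suitable sequence of half-spaces containing the origin, yields $\int F(\cdot,U^*)\geq \int F(\cdot,U^n)$ and the analogous inequality for the nonlocal $G$-term; combining with \textbf{(J 0)}, \textbf{(F 0)}, \textbf{(G 0)} to pass to $|U^n|$, we get $E(U^*)\leq E(U^n)$, so the symmetrized sequence is still minimizing. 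The coercivity $J_i(s,b)\geq a_1 b^p$ from \textbf{(J 1)}, together with the subcritical growth in \textbf{(F 1)}--\textbf{(F 2)} (giving, via Sobolev embedding and interpolation, $\int F\leq C+\varepsilon\sum\|Du_i\|_p^p$) and a Hardy--Littlewood--Sobolev-type bound for $E^3$ furnished by the weak Young inequality and \textbf{(G 1)}--\textbf{(G 2)}, then yields uniform $W^{1,p}$-boundedness.

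Since each component is radial, Lemma~\ref{lem:radcompact} gives $u_i^n\to u_i$ in $L^q(\R^N)$ for every $p<q<p^*$ together with $u_i^n\rightharpoonup u_i$ weakly in $W^{1,p}$. A standard truncation argument based on \textbf{(F 1)}--\textbf{(F 2)} then yields continuity of $E^2$ along $(U^n)$, while continuity of $E^3$ follows from the weak Young inequality combined with the strong $L^r$-convergence in the exponents dictated by \textbf{(G 1)}--\textbf{(G 2)}. Weak lower semicontinuity of $\int J_i(u_i,|Du_i|)$ follows from convexity in the gradient variable (the first assertion of Theorem~\ref{thm:weakstrong}). The limit $U$ satisfies $\int|u_i|^p\leq c_i$ by weak convergence, but possibly without equality, so I would run the same compactness argument on the relaxed problem $\tilde M_c$, producing $U\in \tilde S_c$ with $E(U)=\tilde M_c$. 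Assumption \textbf{(E 0)} then forces $\int|u_i|^p=c_i$ for every $i$: otherwise $d_i:=\int|u_i|^p\leq c_i$ with $d_k<c_k$ for some $k$, so $\tilde M_d\leq E(U)=\tilde M_c$, contradicting \textbf{(E 0)}. Hence $U\in S_c$, and $M_c\leq E(U)=\tilde M_c\leq M_c$ gives a Schwarz-symmetric minimizer of $M_c$ with non-negative components.

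For part (ii), given any minimizer $U\in(W^{1,p}_+)^m\cap S_c$, I would verify the hypotheses of Theorem~\ref{thm:minsym} with $S=S_c$, $j_i=J_i$, and $K(U)=\int F+\int\!\!\int G\,V\,G$: $S_c$ is closed in $(L^p\cap L^{p^*})^m$ and invariant under polarization (polarization preserves $L^p$-norms); conditions \eqref{ji0}--\eqref{ji2} follow from continuity of $J_i$, \textbf{(J 1)}, and \textbf{(J 2)}, with the required strict convexity in the gradient variable available precisely for the distinguished index $i$; continuity of $K$ in the $(L^p\cap L^{p^*})^m$-topology follows from the same growth estimates used in part (i); and $K(U)\leq K(U^*)$ is exactly the rearrangement inequality established there. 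Since $U$ is a minimizer and $U^*\in S_c$, $E(U)\leq E(U^*)$. Theorem~\ref{thm:minsym} then gives $J_i(u_i)=J_i(u_i^*)$ for every $i$, and the strong $W^{1,p}$-conclusion $\|Du_i\|_p=\|Du_i^*\|_p$ survives precisely at those indices where \textbf{(J 2)} holds strictly, since this is the only step in the proof of Theorem~\ref{thm:minsym} that invokes strict convexity (via Theorem~\ref{thm:weakstrong}). The final $u_i=u_i^*$-statement under the $C_i^*$-hypothesis is then immediate from Corollary~\ref{cor:minsym}, i.e., from the main result of \cite{BroZie88a}. The main obstacle lies in part (i): closing the gap between $M_c$ and $\tilde M_c$ via \textbf{(E 0)} requires first producing a minimizer of the relaxed problem, which in turn demands \emph{continuity}---not just lower semicontinuity---of $E^2$ and $E^3$ along the radial minimizing sequence. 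The delicate interplay between \textbf{(F 1)}--\textbf{(F 2)}, the weak-$L^q$ integrability of $V$, and the radial compactness Lemma~\ref{lem:radcompact} is what makes this work; in particular the exponent condition $N(\mu_i/p-(2q-1)/(2q))<p/2$ in \textbf{(G 2)} is precisely what renders the Hardy--Littlewood--Sobolev estimate for $E^3$ subcritical.
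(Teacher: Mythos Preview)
Your proposal is correct and follows essentially the same five-step architecture as the paper's proof: well-posedness and $W^{1,p}$-boundedness via Gagliardo--Nirenberg and the weak Young inequality, passage to a Schwarz-symmetric minimizing sequence via the rearrangement inequalities (Propositions~\ref{prop:Jrearr}--\ref{prop:Grearr}), lower semicontinuity along radial sequences using Lemma~\ref{lem:radcompact} and Propositions~\ref{prop:J2compact}/\ref{prop:J3compact} for the continuity of the lower-order terms, the \textbf{(E 0)} trick on the relaxed problem $\tilde M_c$, and finally Theorem~\ref{thm:minsym}/Corollary~\ref{cor:minsym} for part~(ii). Your remark that strict convexity is only invoked at the distinguished index $i$ (so that Theorem~\ref{thm:minsym} effectively applies component-wise) is in fact more explicit than the paper's Step~V, and your identification of the exponent condition in \textbf{(G 2)} as the subcriticality threshold matches precisely the computation $\gamma_i\mu_i<p/2$ hidden in Step~I.
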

\begin{rem}
As illustrated by some examples in \cite{BroZie88a}, the condition $\nabs{C^*_i}=0$ cannot be dropped in general. Of course, special properties of the functional might still imply this for minimizers $U$, in particular if $U$ happens to be analytic.
\end{rem}
\begin{ex}
For instance, our assumptions are satisfied for 
$$
	\begin{aligned}
		E(U):=&\int_{\RR^3} \sum_{i=1}^m \Big(1+\frac{1}{1+\abs{u_i(x)}}\Big)\abs{D u_i(x)}^2 \,dx\\
		&-\int_{\RR^3}\int_{\RR^3} \Big(\sum_{i=1}^m \abs{u_i(x)}^2\Big) \frac{1}{\abs{x-y}}\Big(\sum_{j=1}^m \abs{u_j(y)}^2\Big)\,dxdy
	\end{aligned}
$$
with $N=3$, $p=2$ and $q=3$.
Moreover, using the monotonicity and homogeneity properties of the integrands in the variable $U$, 
it is not difficult to see that \textbf{(E 0)} holds if $M_c<0$.
The latter is satisfied for every $c\in (0,\infty)^m$; in fact, a simple calculation yields that $E(U_\delta)<0$ for $\delta>0$ small enough, where $U_\delta(x):=\delta^{\frac{3}{2}}U(\delta x)$ for some arbitrary, fixed $U\in S_c$ (whence $U_\delta\in S_c$ for every $\delta>0$). 
\end{ex}

\subsection{Auxiliary results: Continuity and compactness of the lower order terms}

We first recall the following well known compact embedding for Sobolev spaces of radial functions:
\begin{lem}[see \cite{SuWaWi07a}, e.g.]\label{lem:radcompact}
Let $\cU \subset W^{1,p}(\RR^N)$ be a bounded set of radially symmetric functions (with respect to some fixed point in $\RR^N$).
Then $\cU$ is relatively compact in $L^s(\RR^N)$ provided $p<s<p^*=\frac{pN}{N-p}$. 
(If $p\geq N$ set $p^*=+\infty$ in this context.)
\end{lem}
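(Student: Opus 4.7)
The plan is to follow the classical Strauss argument, combining a pointwise radial decay estimate with the Rellich-Kondrachov theorem on bounded balls and a diagonal extraction.

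First I would establish the decay estimate: for any radial $u\in W^{1,p}(\RR^N)$, identifying $v(r)=u(x)$ with $r=|x|$, one has
$$
|v(r)|^{p}\,r^{N-1}\leq C\big(\|u\|_{L^p}^{p-1}\|Du\|_{L^p}+\|u\|_{L^p}^{p}\big)\quad\text{for all } r\geq 1,
$$
with $C=C(N,p)$. To see this I would first reduce to smooth radial functions by density, then integrate the identity
$$
-\tfrac{d}{d\rho}\big(|v(\rho)|^{p}\rho^{N-1}\big)
=-p\,|v|^{p-2}v\,v'(\rho)\,\rho^{N-1}-(N-1)|v|^{p}\rho^{N-2}
$$
from $r$ to $\infty$, using that $|v(\rho)|^{p}\rho^{N-1}\to 0$ along a sequence $\rho_k\to\infty$ (since $u\in L^{p}$ translates to $\int_0^\infty |v|^p\rho^{N-1}\,d\rho<\infty$). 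H\"older's inequality controls the first term on the right by $\|u\|_{L^p}^{p-1}\|Du\|_{L^p}$, while $\rho^{N-2}\leq \rho^{N-1}$ for $\rho\geq 1$ handles the second. In particular, $|u(x)|\leq C_{\cU}|x|^{-(N-1)/p}$ for $|x|\geq 1$, uniformly for $u\in \cU$.

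Next, for $p<s<p^{*}$ I would deduce the uniform tail estimate
$$
\int_{|x|\geq R}|u|^{s}\,dx\leq \Big(\sup_{|x|\geq R}|u(x)|\Big)^{s-p}\int_{\RR^N}|u|^{p}\,dx\leq C\,R^{-(N-1)(s-p)/p},
$$
which tends to $0$ as $R\to\infty$ uniformly on $\cU$. On each ball $B_R$, the Rellich-Kondrachov theorem provides compactness of the embedding $W^{1,p}(B_R)\hookrightarrow L^{s}(B_R)$ since $s<p^{*}$. Given any sequence in $\cU$, I would extract via a diagonal argument a subsequence converging in $L^{s}(B_k)$ for every $k\in\NN$; combined with the tail estimate, this yields convergence in $L^{s}(\RR^N)$.

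The main subtlety lies in the derivation of the Strauss-type estimate, in particular the justification that $|v(\rho)|^{p}\rho^{N-1}\to 0$ along some sequence $\rho_k\to\infty$ (so that integration to infinity is legitimate) and the density reduction to smooth radial functions. The hypothesis $s<p^{*}$ is of course essential for the local Rellich step, while $s>p$ is needed so that $|u|^{s-p}$ actually decays fast enough to absorb the $L^{p}$-mass; both restrictions are sharp, as mass can concentrate at a point for $s=p^{*}$ and escape to infinity for $s=p$.
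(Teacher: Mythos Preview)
The paper does not give its own proof of this lemma; it is stated with a reference to the literature (\cite{SuWaWi07a}). Your argument is the standard Strauss-type proof and is correct: the radial decay estimate, the resulting uniform tail control for $s>p$, local Rellich--Kondrachov for $s<p^*$, and a diagonal extraction together yield the claim. One minor remark: in your integration-by-parts step the $(N-1)|v|^p\rho^{N-2}$ term actually enters with a favorable sign and can simply be dropped, so the extra $\|u\|_{L^p}^p$ in your bound is harmless but unnecessary. Note also that the decay $|u(x)|\lesssim |x|^{-(N-1)/p}$ is only useful for $N\geq 2$; the lemma (and your proof) tacitly assumes this, since for $N=1$ even functions enjoy no such compactness.
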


The local perturbation has the following properties:
\begin{prop}\label{prop:J2compact}
Suppose that \textbf{(F 1)} and \textbf{(F 2)} hold.
Then 
$$
	U\mapsto F(\abs{\cdot},U),~~(L^p\cap L^{p^*})(\RR^N)^m\to L^1(\RR^N),~~\text{is continuous},
$$
and 
$$
	U\mapsto F(\abs{\cdot},U),~~W^{1,p}_\rad(\RR^N)^m\to L^1(\RR^N),~~\text{is compact}
$$ 
(i.e., maps bounded subsets of $(W^{1,p}_\rad)^m$, the subspace of radially symmetric functions in $(W^{1,p})^m$, to relatively compact subsets of $L^1$).
\end{prop}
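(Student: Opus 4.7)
For the continuity part, I argue by the generalized (Vitali) dominated convergence theorem. The growth bound \textbf{(F 1)} gives $0\le F(|\cdot|,U)\le K(|U|^p+\sum_i|U_i|^{l_i+p})$, and the constraint $l_i<p^2/N$ yields $l_i+p<p^*$ by a brief algebraic check. Hence $U\in L^p\cap L^{p^*}$ implies $U\in L^{l_i+p}$ by interpolation, so the majorant lies in $L^1(\RR^N)$. If $U^{(n)}\to U$ in $L^p\cap L^{p^*}$, extract an a.e.--convergent subsequence; interpolation gives $U^{(n)}\to U$ also in $L^{l_i+p}$, so the majorant sequence $K(|U^{(n)}|^p+\sum_i|U_i^{(n)}|^{l_i+p})$ converges in $L^1(\RR^N)$ to the corresponding majorant of $U$. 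Vitali's theorem then provides $F(|\cdot|,U^{(n)})\to F(|\cdot|,U)$ in $L^1(\RR^N)$; since the limit is independent of the subsequence, the whole sequence converges.

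For compactness, let $(U^{(n)})$ be a bounded sequence in $W^{1,p}_{\rad}(\RR^N)^m$. Extracting a subsequence, $U^{(n)}\rightharpoonup U$ weakly in $(W^{1,p})^m$, strongly in $L^s(\RR^N)^m$ for every $p<s<p^*$ by Lemma~\ref{lem:radcompact}, and a.e.\ in $\RR^N$. I split $\int_{\RR^N}=\int_{B_R}+\int_{\RR^N\setminus B_R}$. On $B_R$, the Rellich-Kondrachov theorem upgrades the convergence to $L^\sigma(B_R)$ for all $p\le\sigma<p^*$, in particular in $L^p$ and each $L^{l_i+p}$; the Vitali argument from the first part, localized to $B_R$, then yields $F(|\cdot|,U^{(n)})\to F(|\cdot|,U)$ in $L^1(B_R)$.

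The main difficulty is the tail $\{|x|\ge R\}$, because the $|U|^p$ term in \textbf{(F 1)} is not controlled by Lemma~\ref{lem:radcompact}, which excludes the endpoint $s=p$. This is precisely what assumption \textbf{(F 2)} is designed to handle. Given $\eps>0$, pick $R_0,s_0$ from \textbf{(F 2)} and fix some $s\in(p,p^*)$. For $R\ge R_0$, decompose $\{|x|\ge R\}$ into $A_n^R:=\{|x|\ge R,\ \max_i|U_i^{(n)}|<s_0\}$ and $B_n^R:=\{|x|\ge R,\ \max_i|U_i^{(n)}|\ge s_0\}$. On $A_n^R$, \textbf{(F 2)} gives $\int_{A_n^R}F(|\cdot|,U^{(n)})\le\eps\|U^{(n)}\|_{L^p}^p\le\eps C$. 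On $B_n^R$, Chebyshev's inequality yields $|B_n^R|\le s_0^{-s}\int_{|x|\ge R}|U^{(n)}|^s$, which tends to $0$ as $R\to\infty$ uniformly in $n$ thanks to the $L^s$-convergence. Since $U^{(n)}$ is bounded in $L^{p^*}$ and both $p$ and $l_i+p$ are strictly below $p^*$, H\"older's inequality gives $\int_{B_n^R}|U^{(n)}|^p\le C|B_n^R|^{1-p/p^*}\to 0$ uniformly in $n$, and likewise for $|U_i^{(n)}|^{l_i+p}$; inserting these into \textbf{(F 1)} makes $\int_{B_n^R}F(|\cdot|,U^{(n)})$ vanish uniformly. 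Combining, $\limsup_{R\to\infty}\sup_n\int_{|x|\ge R}F(|\cdot|,U^{(n)})\le\eps C$, and the analogous bound holds for $U$. Letting $\eps\to 0$ and combining with the $B_R$-convergence yields the desired $L^1(\RR^N)$-convergence.
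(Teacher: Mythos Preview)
Your proof is correct and follows essentially the same route as the paper's. Both use the standard Nemytskii continuity for the first part, and for compactness both isolate the region $\{|x|\ge R_0,\ |U_n|<s_0\}$ where \textbf{(F 2)} gives the $\eps$-bound, then handle the remaining finite-measure complement via H\"older and the radial compact embedding; the paper packages the latter step as ``$(U_n)$ relatively compact in $(L^p\cap L^{p+l})(\RR^N\setminus A_\eps)$'' while you split it explicitly into the ball $B_R$ (Rellich) and the large-value tail $B_n^R$ (Chebyshev with an intermediate $L^s$, then H\"older against the $L^{p^*}$ bound), which is arguably cleaner given that $A_\eps$ depends on $n$.
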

\begin{proof}
The continuity of the Nemytskii operator $U\mapsto F(\abs{\cdot},U)$ associated to $F$ is standard for a Carath\'eodory function satisfying the growth condition \textbf{(F 1)}; here, note that $\frac{p^2}{N}+p\leq p^*$. For the second part of the assertion let $(U_n)=((u_{n,1},\ldots,u_{n,m}))\subset W^{1,p}_\rad(\RR^N)^m$ be a bounded sequence.
Let $\eps>0$, let $R_0>0$ and $0<s_0\leq 1$ be the associated constants in \textbf{(F 2)} and consider the set
$$
	A_\eps=A_\eps(n):=\{\abs{x}\geq R_0\}\cap \{\abs{U_n(x)}\leq s_0\}
$$
Observe that by \textbf{(F 1)} and \textbf{(F 2)},
\begin{equation*}
	\int_{A_\eps} \abs{F(\abs{x},U_n)}\,dx\leq \eps \int_{\RR^N} \abs{U_n}^p\,dx\leq C\eps,
\end{equation*}
with a constant $C$ independent of $n$. Moreover, by \textbf{(F 1)},
$U\mapsto F(\abs{\cdot},U),~~(L^p\cap L^{p+l})\to L^1$ is continuous, where $l:=\max_i l_i$,
and it thus suffices to show that for every fixed $\eps>0$,
\begin{equation}\label{pJ2c-2}
	\text{$(U_n)$ is relatively compact in $(L^p\cap L^{p+l})(\RR^N\setminus A_\eps;\RR^m)$}.
\end{equation}
By Lemma~\ref{lem:radcompact}, we immediately get that 
$(U_n)$ is relatively compact in $(L^s\cap L^{p+l})(\RR^N)^m$
for every $p<s\leq p+l$ since $p\leq p+l<p^*$ by \textbf{(F 1)}.
By H\"older's inequality and the fact that $\RR^N\setminus A_\eps$ has finite measure, this implies \eqref{pJ2c-2}.
\end{proof}

For the convolution term in the energy, we rely on Young's inequality for convolutions in weak form: 
If $v\in L_w^q(\RR^N)$, 
$f\in L^s(\RR^N)$, $g\in L^{t}(\RR^N)$, $s,t\in (1,\infty]$, $q\in (1,\infty)$ and $\frac{1}{q}+\frac{1}{s}+\frac{1}{t}=2$
then $f\cdot(v*g)$ belongs to $L^1(\RR^N)$ and
\begin{equation}\label{Yiconv}
	\norm{f(v*g)}_{L^1}\leq C \norm{W}_{L^q_w}\norm{f}_{L^s}\norm{g}_{L^t}
\end{equation}
holds with a constant $C=C(q,s,t,N)$, cf.~\cite{LiLo01B}. Here,
$v*g$ denotes the convolution of $v$ and $g$.


\begin{prop}\label{prop:convtermwcont}
Let $1<q<\infty$, let $v\in L^q_w(\RR^N)$, let
$1<\alpha\leq \beta <\infty$ and suppose that
$2-\frac{2}{\alpha}\leq \frac{1}{q}$ and
$2-\frac{2}{\beta}\geq \frac{1}{q}$.
Then
\begin{align}\label{pctwc-1}
	(g,h)\mapsto g(v*h),~~(L^\alpha\cap L^\beta)(\RR^N)\times (L^\alpha\cap L^\beta)(\RR^N)\to L^1(\RR^N),
\end{align}
is continuous.
\end{prop}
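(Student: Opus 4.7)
The plan is to view the map $(g,h)\mapsto g(v*h)$ as a continuous bilinear map between Banach spaces; since bilinearity is obvious, it suffices to establish the boundedness estimate
$$
 \norm{g(v*h)}_{L^1(\RR^N)}\leq C\norm{v}_{L^q_w}\norm{g}_{L^\alpha\cap L^\beta}\norm{h}_{L^\alpha\cap L^\beta},
$$
where $\norm{\cdot}_{L^\alpha\cap L^\beta}:=\norm{\cdot}_{L^\alpha}+\norm{\cdot}_{L^\beta}$. Continuity then follows in a standard way by splitting
$$
 g_n(v*h_n)-g(v*h)=(g_n-g)(v*h_n)+g\bigl(v*(h_n-h)\bigr)
$$
and applying the bound to each summand, using that $(h_n)$ is bounded in $L^\alpha\cap L^\beta$.

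To produce the bound, the idea is to pick a single intermediate exponent $r$ and apply the weak-type Young inequality \eqref{Yiconv} with $s=t=r$. For the resulting convolution estimate to land in $L^1$, the exponent $r$ must satisfy $\frac{1}{q}+\frac{2}{r}=2$, which forces
$$
 r=\frac{2q}{2q-1}\in(1,2).
$$
The key step is then to check that $\alpha\leq r\leq\beta$, so that the continuous embedding $L^\alpha\cap L^\beta\hookrightarrow L^r$ holds by interpolation. This is exactly where the two numerical hypotheses enter: $\frac{1}{r}=1-\frac{1}{2q}$, and the inequalities $2-\frac{2}{\alpha}\leq\frac{1}{q}$ and $2-\frac{2}{\beta}\geq\frac{1}{q}$ rearrange to $\frac{1}{\beta}\leq 1-\frac{1}{2q}\leq\frac{1}{\alpha}$, which is precisely $\alpha\leq r\leq\beta$.

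Once $r$ is fixed, the rest is routine: the weak Young inequality \eqref{Yiconv} with $s=t=r$ yields
$$
 \norm{g(v*h)}_{L^1}\leq C\norm{v}_{L^q_w}\norm{g}_{L^r}\norm{h}_{L^r},
$$
and the interpolation bound $\norm{\cdot}_{L^r}\leq C'\bigl(\norm{\cdot}_{L^\alpha}+\norm{\cdot}_{L^\beta}\bigr)$ (valid since $\alpha\leq r\leq\beta$) gives the desired estimate. I do not expect any real obstacle here: the entire proof is essentially a book-keeping exercise matching the hypotheses on $\alpha,\beta,q$ to the admissibility condition $\frac{1}{q}+\frac{1}{s}+\frac{1}{t}=2$ of the weak Young inequality. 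The only point that requires a brief verification is that $r\in(1,\infty)$, which follows directly from $q\in(1,\infty)$.
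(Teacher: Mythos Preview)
Your proof is correct. The approach differs from the paper's in an instructive way.

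The paper does not interpolate on the side of $g$ and $h$; instead it decomposes the kernel. It writes $v=v_1+v_2$ with $v_1:=\chi_{\{\abs{v}<1\}}v$ and $v_2:=\chi_{\{\abs{v}\geq 1\}}v$, observes that $v_1\in L^s_w$ with $s:=\frac{\alpha}{2\alpha-2}\geq q$ and $v_2\in L^t_w$ with $t:=\frac{\beta}{2\beta-2}\leq q$, and then applies the weak Young inequality once with the pair $(L^\alpha,L^\alpha)$ against $v_1$ and once with $(L^\beta,L^\beta)$ against $v_2$. Adding the two estimates gives the bilinear bound, and continuity follows from bilinearity exactly as you argue.

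Your route---fixing the single exponent $r=\frac{2q}{2q-1}$ so that $\frac{1}{q}+\frac{2}{r}=2$, checking from the hypotheses that $\alpha\leq r\leq\beta$, and invoking the interpolation embedding $L^\alpha\cap L^\beta\hookrightarrow L^r$---is arguably cleaner: one application of \eqref{Yiconv} suffices, and the weak-$L^q$ exponent is used as given rather than shifted. The paper's decomposition, on the other hand, makes the roles of the two endpoint conditions on $\alpha$ and $\beta$ more visibly separate (each handles one piece of the kernel). Either way the substance is the same bilinear estimate, and both arguments deduce continuity from boundedness plus bilinearity.
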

\begin{proof}
We split
$$
	v=v_1+v_2~~\text{with}~v_1:=\chi_{\{\abs{v}<1\}}v,~v_2:=\chi_{\{\abs{v}\geq 1\}}v.
$$
By assumption, $s:=\frac{\alpha}{2\alpha-2}\geq q$, whence $v_1\in L^s_w$.
Young's inequality for convolutions combined with H\"older's inequality yields that
\begin{alignat*}{2} 
	&\norm{g(v_1*h)}_{L^1}\leq \norm{v_1}_{L^s_w}\norm{g}_{L^\alpha}\norm{h}_{L^\alpha}.
\end{alignat*}
Similarly, we have that $t:=\frac{\beta}{2\beta-2}\leq q$, whence $v_2\in L^t_w$ and
\begin{alignat*}{2} 
	&\norm{g(v_2*h)}_{L^1}\leq \norm{v_2}_{L^t_w}\norm{g}_{L^\beta}\norm{h}_{L^\beta}.
\end{alignat*}
Together, this implies that
\begin{alignat*}{2} 
	&\norm{g(v*h)}_{L^1}\leq \big(\norm{v_1}_{L^s_w}+\norm{v_2}_{L^t_w}\big) \norm{g}_{L^\alpha\cap L^\beta}\norm{h}_{L^\alpha\cap L^\beta}.
\end{alignat*}
The asserted continuity follows since $(g,h)\mapsto g(v*h)$ is bilinear.
\end{proof}
For the convolution term in our functional, we get
\begin{prop}\label{prop:J3compact}
Suppose that \textbf{(G 1)} and \textbf{(G 2)} hold.
Then 
$$
	U\mapsto G(U)[V(\abs{\cdot})*G(U)],~~(L^p\cap L^{p^*})(\RR^N)^m\to L^1(\RR^N),~~\text{is continuous},
$$
and 
$$
	U\mapsto G(U)[V(\abs{\cdot})*G(U)],~~W^{1,p}_\rad(\RR^N)^m\to L^1(\RR^N),~~\text{is compact.}
$$ 
\end{prop}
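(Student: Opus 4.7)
My plan is to reduce both assertions to a single Nemytskii continuity statement into a conveniently chosen $L^{\alpha}$ space, combined with the bilinear continuity provided by Proposition~\ref{prop:convtermwcont} (equivalently, Young's inequality for convolutions in weak form). The natural choice is $\alpha:=\tfrac{2q}{2q-1}$, so that $\tfrac{1}{q}+\tfrac{2}{\alpha}=2$ and hence $2-\tfrac{2}{\alpha}=\tfrac{1}{q}$, which means Proposition~\ref{prop:convtermwcont} applies with $\alpha=\beta$. This yields a bounded bilinear map
$$
(g,h)\mapsto g\,(V*h),\qquad L^{\alpha}(\RR^N)\times L^{\alpha}(\RR^N)\To L^{1}(\RR^N).
$$

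Next, I verify that $U\mapsto G(U)$ is continuous from $(L^{p}\cap L^{p^*})(\RR^N)^m$ into $L^{\alpha}(\RR^N)$. The strict bounds in \textbf{(G 1)}, $p\tfrac{2q-1}{2q}<\mu_i<p^*\tfrac{2q-1}{2q}$, imply that $\mu_i\alpha=\tfrac{2q\mu_i}{2q-1}\in(p,p^*)$ for each $i$, so by interpolation the inclusion $(L^{p}\cap L^{p^*})\hookrightarrow L^{\mu_i\alpha}$ is continuous. Combined with the pointwise growth $G(s)\leq K'\sum_i s_i^{\mu_i}$ and continuity of $G$, standard Nemytskii theory gives continuity of $U\mapsto G(U)$ from $\prod_i L^{\mu_i\alpha}(\RR^N)$ into $L^{\alpha}(\RR^N)$, and therefore also from $(L^p\cap L^{p^*})(\RR^N)^m$ into $L^{\alpha}(\RR^N)$. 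Composing with the bilinear continuity above immediately yields the first assertion: if $U_n\to U$ in $(L^p\cap L^{p^*})^m$, then $G(U_n)\to G(U)$ in $L^{\alpha}$, and hence $G(U_n)[V*G(U_n)]\to G(U)[V*G(U)]$ in $L^{1}$.

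For the compactness statement, I take a bounded sequence $(U_n)\subset(W^{1,p}_\rad)^m$ and note that since $\mu_i\alpha\in(p,p^*)$ strictly, Lemma~\ref{lem:radcompact} applies in each component, producing a subsequence such that $u_{n,i}\to u_i$ strongly in $L^{\mu_i\alpha}(\RR^N)$ for every $i$. Nemytskii continuity then upgrades this to $G(U_n)\to G(U)$ in $L^{\alpha}(\RR^N)$, and the bilinear continuity again gives $G(U_n)[V*G(U_n)]\to G(U)[V*G(U)]$ in $L^{1}(\RR^N)$, finishing the proof.

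The only point requiring real attention is the strict inclusion $\mu_i\alpha\in(p,p^*)$, which is precisely what the \emph{strict} inequalities in \textbf{(G 1)} deliver; beyond $V(|\cdot|)\in L^q_w(\RR^N)$, the remaining quantitative assumption in \textbf{(G 2)} is not needed here. Other than that, everything is routine composition of Nemytskii and convolution estimates, so I expect no real obstacle.
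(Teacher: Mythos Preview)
Your proof is correct and follows essentially the same route as the paper: factor the map through the Nemytskii operator $U\mapsto G(U)$ into a suitable Lebesgue space, then apply the bilinear convolution continuity of Proposition~\ref{prop:convtermwcont}, using Lemma~\ref{lem:radcompact} for the compactness part. The only difference is cosmetic: the paper works with a pair $\alpha\leq\beta$ satisfying $2-\tfrac{2}{\alpha}\leq\tfrac{1}{q}\leq 2-\tfrac{2}{\beta}$, whereas you observe that the single choice $\alpha=\beta=\tfrac{2q}{2q-1}$ already does the job because the strict inequalities in \textbf{(G 1)} force $\mu_i\alpha\in(p,p^*)$; your remark that the extra inequality in \textbf{(G 2)} is not used here is also correct.
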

\begin{rem}
For compactness, the restriction to radially symmetric functions cannot be dropped because
$U\mapsto G(U)[v(\abs{\cdot})*G(U)]$ is invariant under translations.
\end{rem}
\begin{proof}
In the following, we abbreviate
$\mu_-:=\min_j \mu_j$ and $\mu_+:=\max_j \mu_j$.
Using the continuity and growth of $G$, we have that $U\mapsto G(U)$, $(L^p\cap L^{p^*})^m\to L^{\alpha}\cap L^{\beta}$, is continuous
provided that
\begin{equation*}
	\max\{1,\tfrac{p}{\mu_-}\}\leq \alpha\leq \beta\leq  \tfrac{p^*}{\mu_+}~~\text{($\max\{1,\tfrac{p}{\mu_-}\}\leq \alpha\leq \beta<\infty$ if $p\geq N$)}.
\end{equation*}
Moreover, by Lemma~\ref{lem:radcompact}, $W_\rad^{1,p}$ is compactly embedded in $L^s\cap L^t$ with $p<s\leq t<p^*$, and
similarly as before, we find that $U\mapsto G(U)$, $W^{1,p}_\rad\to L^{\alpha}\cap L^{\beta}$ is compact whenever 
\begin{equation}\label{pJ3c-2}
	\max\{1,\tfrac{p}{\mu_-}\}< \alpha\leq \beta< \tfrac{p^*}{\mu_+}~~\text{($\max\{1,\tfrac{p}{\mu_-}\}< \alpha\leq \beta< \infty$ if $p\geq N$)}.
\end{equation}
As a consequence of the inequalities relating $\mu_j$, $p$ and $p^*$ in \textbf{(G 1)},
there exist $\alpha,\beta$ such that in addition to \eqref{pJ3c-2},
$2-\frac{2}{\alpha}\leq \frac{1}{q}$ and $2-\frac{2}{\beta}\geq  \frac{1}{q}$.
By Proposition~\ref{prop:convtermwcont}, the continuity and compactness of $U\mapsto G(U)$, respectively, now
carry over to the asserted continuity and compactness
$U\mapsto G(U)[V(\abs{\cdot})*G(U)]$.
\end{proof}

\subsection{Auxiliary results: Rearrangement inequalities}

The leading part of the energy satisfies the following extended Polya-Szeg\"o inequality:
\begin{prop}\label{prop:Jrearr}
Let $i\in \{1,\ldots m\}$ and suppose that $J_i:\RR_+\times \RR_+\to \RR_+$ is a continuous function satisfying \textbf{(J 2)}.
Then for every $u_i\in W^{1,p}_+(\RR^N)$,
\begin{equation}\label{Jrearr}
	\int_{\RR^N} J_i(u_i^*,\nabs{Du_i^*})\,dx\leq \int_{\RR^N} J_i(u_i,\nabs{Du_i})\,dx.
\end{equation}
\end{prop}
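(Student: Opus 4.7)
The plan is to approximate $u_i^*$ by iterated polarizations of $u_i$, exploit the invariance of the functional under polarization (Lemma~\ref{lem:polintinv}), and then pass to the limit via weak lower semicontinuity of the convex integrand in the gradient variable.

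We may assume $\int_{\RR^N} J_i(u_i,\nabs{Du_i})\,dx<\infty$, otherwise \eqref{Jrearr} holds trivially. By the Sobolev embedding, $u_i \in L^p(\RR^N)\cap L^{p^*}(\RR^N)$. Using Theorem~\ref{thm:BroSo} together with Remark~\ref{rem:polandsymm}, choose a sequence of half-spaces $H_n$ through the origin such that the associated iterated polarizations $u_n:=u_{n-1}^{H_n}$ (with $u_0:=u_i$) satisfy $u_n\to u_i^*$ strongly in $L^p\cap L^{p^*}$. Since $u_i\geq 0$ and polarization preserves non-negativity, $u_n\geq 0$ for every $n$.

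Applying Lemma~\ref{lem:polintinv} first with $f=J_i$ and then with $f(s,t)=t^p$, we obtain
\begin{equation*}
	\int_{\RR^N} J_i(u_n,\nabs{Du_n})\,dx = \int_{\RR^N} J_i(u_i,\nabs{Du_i})\,dx \quad\text{and}\quad \norm{Du_n}_{L^p}=\norm{Du_i}_{L^p}
\end{equation*}
for every $n$. Since also $\norm{u_n}_{L^p}=\norm{u_i}_{L^p}$, the sequence $(u_n)$ is bounded in $W^{1,p}(\RR^N)$, and because $u_n\to u_i^*$ in $L^p$, its weak $W^{1,p}$-limit is forced to be $u_i^*$. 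The strong convergence in $L^p\cap L^{p^*}$ yields $p$- and $p^*$-equiintegrability of $(u_n)$, and Lemma~\ref{lem:polgradequiint} (with $q=p$) then gives $p$-equiintegrability of $(Du_n)$.

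Finally, consider the Carath\'eodory integrand $j(x,\mu,\xi):=J_i(\mu,\abs{\xi})$. It is non-negative (so the lower bound \eqref{j1} holds trivially) and convex in $\xi$, since $\xi\mapsto\abs{\xi}$ is convex and $J_i(\mu,\cdot)$ is convex and non-decreasing by \textbf{(J 2)}. The weak lower semicontinuity part of Theorem~\ref{thm:weakstrong} requires only convexity, not strict convexity, in the gradient variable (as noted in its proof, cf.~\cite{Io77a}), so we conclude
\begin{equation*}
	\int_{\RR^N} J_i(u_i^*,\nabs{Du_i^*})\,dx \leq \liminf_{n\to\infty}\int_{\RR^N} J_i(u_n,\nabs{Du_n})\,dx = \int_{\RR^N} J_i(u_i,\nabs{Du_i})\,dx,
\end{equation*}
which is \eqref{Jrearr}. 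There is no substantial obstacle once the polarization machinery is in place; the only delicate point is verifying the equiintegrability hypotheses needed to apply the lower semicontinuity on the unbounded domain $\RR^N$, and this is handled precisely by Lemma~\ref{lem:polgradequiint} together with Remark~\ref{rem:polandsymm}.
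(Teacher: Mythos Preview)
Your proof is correct and follows precisely the approach outlined in the paper's own proof, which only sketches the argument (``based on the approximation of $u_i^*$ by iterated polarizations of $u_i$ \ldots, Lemma~\ref{lem:polintinv} and the weak lower semicontinuity \ldots\ shown in Theorem~\ref{thm:weakstrong}''). You have filled in exactly the details the paper omits, including the use of Lemma~\ref{lem:polgradequiint} to verify the equiintegrability hypothesis and the observation that only convexity (not strict convexity) is needed for the lower semicontinuity part of Theorem~\ref{thm:weakstrong}.
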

\begin{proof}
This is essentially well known. It is not difficult to obtain a proof based on the approximation of $u_i^*$ by iterated polarizations of $u_i$ (Theorem~\ref{thm:BroSo} and Remark~\ref{rem:polandsymm}), 
Lemma~\ref{lem:polintinv} and the weak lower semicontinuity of $u\mapsto \int_{\RR^N} J_i(u,\nabs{Du})$ in $W^{1,p}$ shown in Theorem~\ref{thm:weakstrong}. We omit the details.
\end{proof}
The local and nonlocal perturbations behave as follows under polarization and Schwarz rearrangement: 
\begin{prop}\label{prop:Frearr}
Suppose that $F:\RR_+\times \RR_+^m\to \RR$ is a Caratheodry function satisfying \textbf{(F 1)} and \textbf{(F 3)}, and let 
$U=(u_1,\ldots,u_m)\in (L^p\cap L^{p^*})(\RR^N)^m$ such that $u_i\geq 0$ for $i=1,\ldots,m$.
Then for every every closed half-space $H\subset \RR^N$ containing the origin, we have that
$$
	\int_{\RR^N} F(\abs{x},U^H)\,dx~\geq~ \int_{\RR^N} F(\abs{x},U)\,dx,
$$
where $U^H=(u_1^H,\ldots,u_m^H)$ is the polarization of $U$ with respect to $H$. Moreover,
$$
	\int_{\RR^N} F(\abs{x},U^*)\,dx~\geq~ \int_{\RR^N} F(\abs{x},U)\,dx,
$$
where $U^*=(u_1^*,\ldots,u_m^*)$ is the Schwarz rearrangement of $U$.
\end{prop}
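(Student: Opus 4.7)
The plan is to first establish the polarization inequality by reducing it to a pointwise statement, and then derive the Schwarz-rearrangement inequality by approximation via iterated polarizations.

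For the polarization part, I would fix a closed half-space $H$ containing the origin. Splitting $\RR^N = H\cup (\RR^N\setminus H)$ and using the change of variable $x\mapsto x_H$ (the reflection across $\partial H$, an isometry with Jacobian one) on the complement, one rewrites
$$
\int_{\RR^N} F(|x|, U^H)\,dx = \int_H \bigl[F(|x|, U^H(x)) + F(|x_H|, U^H(x_H))\bigr]\,dx,
$$
and similarly with $U$ in place of $U^H$. Since $H\ni 0$, a direct computation gives $|x|\leq |x_H|$ for $x\in H$. Setting $r=|x|$, $R=|x_H|$, $A = U(x)$, $B = U(x_H)$, the componentwise definition of polarization yields $U^H(x) = \alpha$ and $U^H(x_H) = \beta$ with $\alpha_i = \max(a_i,b_i)$, $\beta_i = \min(a_i,b_i)$, so the polarization inequality reduces to the pointwise statement
$$
F(r,\alpha) + F(R,\beta) \geq F(r,A) + F(R,B),\qquad r\leq R.
$$

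This is where \textbf{(F 3)} enters: I would read $G(t,y):=F(1/t,y)$ as a function on the product lattice $\RR_+\times \RR_+^m$. The first inequality in (F 3) is pairwise supermodularity of $G$ in every pair $(y_i,y_j)$, and the second (after substituting $t_1 = 1/r \geq 1/R = t_2$) is pairwise supermodularity in each pair $(t,y_i)$. Topkis's theorem --- pairwise supermodularity on a product of chains implies joint lattice supermodularity --- then gives $G(u\vee v)+G(u\wedge v)\geq G(u)+G(v)$. Applied to $u=(1/r, A)$ and $v=(1/R, B)$, this is precisely the desired four-term inequality, since $u\vee v = (1/r, \alpha)$ and $u\wedge v = (1/R, \beta)$. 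Integrating over $H$ yields the polarization claim; integrability of $F(|\cdot|,U)\in L^1$ comes from \textbf{(F 1)} combined with $U\in (L^p\cap L^{p^*})^m$, noting that $l_i<p^2/N$ forces $l_i+p\leq p^*$.

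For the Schwarz rearrangement assertion, Theorem~\ref{thm:BroSo} together with Remark~\ref{rem:polandsymm} furnishes a sequence of half-spaces $H_n$ (all containing the origin) whose iterated polarizations $U^n$ converge to $U^*$ in $(L^p\cap L^{p^*})^m$. Iterating the polarization inequality gives $\int F(|\cdot|, U^n)\geq \int F(|\cdot|, U)$ for all $n$, and the continuity of $U\mapsto F(|\cdot|,U)$ from $(L^p\cap L^{p^*})^m$ to $L^1$ established in Proposition~\ref{prop:J2compact} allows passage to the limit. The main obstacle is the pointwise inequality: recognizing that \textbf{(F 3)} is a disguised statement of the lattice supermodularity of $F(1/t,y)$, and then using Topkis to merge the two one-pair statements into the four-term inequality that correctly pairs the larger component-tuple with the smaller radius. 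An alternative, more hands-on route is to perform the component swaps $a_i\leftrightarrow b_i$ (for $i$ with $a_i<b_i$) one at a time; this avoids Topkis but forces one to order the swaps carefully so that the marginal-gain comparisons furnished by (F 3) align, whereas the lattice viewpoint makes the combinatorics vanish.
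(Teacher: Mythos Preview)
Your argument is correct. The paper itself gives no proof here at all, simply citing \cite{BuHa06a}; your write-up is essentially a self-contained version of the argument in that reference, reducing the polarization inequality to the lattice supermodularity of $(t,y)\mapsto F(1/t,y)$ via Topkis and then passing to the Schwarz rearrangement by iterated polarizations plus continuity. One small bookkeeping point: you invoke Proposition~\ref{prop:J2compact} for the continuity of $U\mapsto F(|\cdot|,U)$ on $(L^p\cap L^{p^*})^m$, but that proposition is stated under \textbf{(F 1)} and \textbf{(F 2)}, whereas here only \textbf{(F 1)} and \textbf{(F 3)} are assumed. This is harmless, since the continuity part of Proposition~\ref{prop:J2compact} uses only the growth bound \textbf{(F 1)} (as noted in its proof), but you should say so explicitly rather than cite the proposition wholesale.
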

\begin{proof}
See \cite{BuHa06a}.
\end{proof}
\begin{prop}\label{prop:Grearr}
Suppose that $G:\RR_+^m\to \RR^+$ is a continuous function, suppose that $G$ and $V$ satisfy \textbf{(G 1)}--\textbf{(G 4)} and let
$U=(u_1,\ldots,u_m)\in (L^p\cap L^{p^*})(\RR^N)^m$ such that $u_i\geq 0$ for $i=1,\ldots,m$.
With
$$
	Q(U):=\int_{\RR^N} \int_{\RR^N} G(u_{1}(x),\ldots, u_{m}(x)) V(|x-y|) G(u_{1}(y),\ldots, u_{m}(y)) \: dx \: dy,
$$
we then have that
$$
	Q(U^H)\geq Q(U)
$$
for every closed half-space $H\subset \RR^N$, where $U^H=(u_1^H,\ldots,u_m^H)$ denotes the polarization of $U$ with respect to $H$. In addition,
$$
	Q(U^*)\geq Q(U),
$$
where $U^*=(u_1^*,\ldots,u_m^*)$ is the Schwarz rearrangement of $U$.
\end{prop}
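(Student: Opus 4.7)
The plan is to prove the polarization inequality $Q(U^H)\geq Q(U)$ by an elementary pointwise two-point analysis on $H\times H$, and then derive the Schwarz rearrangement inequality by iterating polarizations and invoking the continuity of $Q$ on $(L^p\cap L^{p^*})^m$ established in Proposition~\ref{prop:J3compact}. For the polarization step, I would split the double integral over $\RR^N\times\RR^N$ into the four quadrants $H\times H$, $H\times H^c$, $H^c\times H$, $H^c\times H^c$, and use reflection across $\partial H$ as a measure-preserving change of variables in each off-diagonal piece to rewrite both $Q(U)$ and $Q(U^H)$ as integrals over $H\times H$. Setting $a:=U(x)$, $b:=U(x_H)$, $c:=U(y)$, $d:=U(y_H)$, $\alpha:=V(|x-y|)=V(|x_H-y_H|)$, $\beta:=V(|x-y_H|)=V(|x_H-y|)$, the elementary identity $|x-y_H|^2-|x-y|^2=4(x\cdot e-t)(y\cdot e-t)\geq 0$ for $x,y\in H=\{z\cdot e\geq t\}$, combined with the non-increasing property \textbf{(G 3)} of $V$, gives $\alpha\geq\beta\geq 0$. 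With $r_1:=G(a), r_2:=G(b), s_1:=G(c), s_2:=G(d)$ and $P:=G(a\vee b), p:=G(a\wedge b), \tilde P:=G(c\vee d), \tilde p:=G(c\wedge d)$, the pointwise integrand comparison reduces to $\alpha\epsilon+\beta\delta\geq 0$, where
$$\epsilon:=P\tilde P+p\tilde p-r_1s_1-r_2s_2,\qquad\delta:=P\tilde p+p\tilde P-r_1s_2-r_2s_1.$$

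The key algebraic lemma, which I expect to be the main substantive step, is that $\epsilon\geq 0$. This follows by combining
$$\epsilon+\delta=(P+p)(\tilde P+\tilde p)-(r_1+r_2)(s_1+s_2)\geq 0,$$
a direct consequence of the supermodularity in \textbf{(G 4)} (giving $P+p\geq r_1+r_2$ and $\tilde P+\tilde p\geq s_1+s_2$), with
$$\epsilon-\delta=(P-p)(\tilde P-\tilde p)-(r_1-r_2)(s_1-s_2)\geq 0,$$
which uses only the monotonicity in \textbf{(G 4)} to conclude $P-p\geq|r_1-r_2|$ and $\tilde P-\tilde p\geq|s_1-s_2|$. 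Summing these two inequalities gives $2\epsilon\geq 0$, and the decomposition $\alpha\epsilon+\beta\delta=(\alpha-\beta)\epsilon+\beta(\epsilon+\delta)$ together with $\alpha\geq\beta\geq 0$ then yields the pointwise inequality; integration over $H\times H$ produces $Q(U^H)\geq Q(U)$.

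For the Schwarz step, Theorem~\ref{thm:BroSo} and Remark~\ref{rem:polandsymm} (applied componentwise) furnish a sequence of closed half-spaces $H_n\ni 0$ for which the iterated polarizations $U^n$ of $U$ converge to $U^*$ in $(L^p\cap L^{p^*})^m$. Iterating the polarization inequality gives $Q(U^n)\geq Q(U)$ for every $n$, and the continuity of $Q$ on $(L^p\cap L^{p^*})^m$ from Proposition~\ref{prop:J3compact} allows passing to the limit to conclude $Q(U^*)\geq Q(U)$. The main subtlety throughout is the pointwise inequality: because $U$ is vector-valued, the two-point polarization does \emph{not} act on $G\circ U$ as the scalar polarization of the composite, so the standard scalar Riesz-type rearrangement inequality does not apply directly, and the lattice supermodularity of $G$ built into \textbf{(G 4)} (rather than just scalar monotonicity) is precisely what compensates for this.
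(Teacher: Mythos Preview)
Your proof is correct and follows essentially the same approach as the paper: split the double integral into four quadrants, reduce by reflection to an integral over $H\times H$, use $V(|x-y|)\geq V(|x-y_H|)$, and establish the same two pointwise inequalities $\epsilon\geq 0$ and $\epsilon+\delta\geq 0$ (the paper writes these as \eqref{pconvrearr-2} and \eqref{pconvrearr-3}), then pass to the Schwarz rearrangement via iterated polarizations and the continuity from Proposition~\ref{prop:J3compact}. The only cosmetic difference is that the paper obtains $\epsilon\geq 0$ by first observing that \textbf{(G 4)} makes $(v,w)\mapsto G(v)G(w)$ supermodular on $\RR_+^{2m}$ and then invoking Lemma~6.1 of \cite{BuHa06a}, whereas your $\epsilon\pm\delta$ computation is a direct, self-contained unpacking of that same fact.
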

\begin{proof}
First observe that \textbf{(G 4)} implies the supermodularity of
$$
	(v_1,\ldots,v_m,w_1,\ldots,w_m)\mapsto G(v_1,\ldots,v_m)G(w_1,\ldots,w_m),~~\RR_+^{2m}\to \RR_+.
$$
(In fact, the converse is also true, since by \textbf{(G 1)}, $G$ cannot be decreasing in each component unless $G\equiv 0$.)
As a consequence, we have that
\begin{equation*}
\begin{aligned}
	&G(v_1,\ldots,v_m)G(w_1,\ldots,w_m)+G(v'_1,\ldots,v'_m)G(w'_1,\ldots,w'_m)\\
	&\leq \begin{aligned}[t]
		&G(\max\{v_1,v'_1\},\ldots,\max\{v_m,v'_m\})G(\max\{w_1,w'_1\},\ldots,\max\{w_m,w'_m\})\\
		&+G(\min\{v_1,v'_1\},\ldots,\min\{v_m,v'_m\})G(\min\{w_1,w'_1\},\ldots,\min\{w_m,w'_m\})
	\end{aligned}
\end{aligned}
\end{equation*}
for every $v,v',w,w'\in \RR^m_+$ (see Lemma 6.1 in \cite{BuHa06a}). In view of the definition of $u^H$, this means that for every $x,y\in H$,
\begin{equation}\label{pconvrearr-2}
	\begin{aligned}
	&G(U(x))G(U(y))+G(U(x_H))G(U(y_H))\\
	&\qquad \leq G(U^H(x))G(U^H(y))+G(U^H(x_H))G(U^H(y_H))
	\end{aligned}
\end{equation}
and similarly, the supermodularity of $G$ yields that
\begin{equation}\label{pconvrearr-3}
	G(u(x))+G(U(x_H))\leq G(U^H(x))+G(U^H(x_H)).
\end{equation}
Here, $x_H$ is the reflection of $x$ with respect to $\partial H$.
Splitting each of the two integrals in the definition of $Q$ into integrals over $H$ and its complement, a change of variables yields
\begin{align*}
	Q(U)&=\begin{aligned}[t]
	&\int_H\int_H G(U(x))V(\abs{x-y})G(U(y))\,dxdy\\
	&+\int_H\int_H G(U(x_H))V(\abs{x_H-y_H})G(U(y_H))\,dxdy\\
	&+\int_H\int_H G(U(x))V(\abs{x-y_H})G(U(y_H))\,dxdy\\
	&+\int_H\int_H G(U(x_H))V(\abs{x_H-y})G(U(y))\,dxdy,
	\end{aligned}
\end{align*}
Using \eqref{pconvrearr-2} and \eqref{pconvrearr-3} together with the fact that 
$$
	A(x,y):=V(\abs{x-y})=V(\abs{x_H-y_H}) ~\geq~ a(x,y):=V(\abs{x-y_H})=V(\abs{x_H-y}),
$$
we infer that
\begin{align*}
	&Q(U)\\
	&=\begin{aligned}[t]
	&\int_H\int_H \big[G(U(x))G(U(y))+G(U(x_H))G(U(y_H))\big](A-a)(x,y)\,dxdy\\
	&+\int_H\int_H \big[G(U(x))+G(U(x_H))\big]\big[G(U(y))+G(U(y_H))\big] a(x,y)\,dxdy
	\end{aligned}\\
	&\leq \begin{aligned}[t]
	&\int_H\int_H \big[G(U^H(x))G(U^H(y))+G(U^H(x_H))G(U^H(y_H))\big](A-a)(x,y)\,dxdy\\
	&+\int_H\int_H \big[G(U^H(x))+G(U^H(x_H))\big]\big[G(U^H(y))+G(U^H(y_H))\big] a(x,y)\,dxdy
	\end{aligned}\\
	&= \begin{aligned}[t]
	Q(U^H).
	\end{aligned}
\end{align*}
The corresponding inequality for the Schwarz rearrangement,
$
	Q(U^*)\geq Q(U),
$
is now a consequence of the approximation of $U^*$ by a sequence of iterated polarizations (Theorem~\ref{thm:BroSo} and Remark~\ref{rem:polandsymm}),
also exploiting the continuity of $Q$ in $(L^p\cap L^{p*})^m$ due to Proposition~\ref{prop:J3compact}.
\end{proof}

\subsection{Proof of Theorem~\ref{thm:app}}

The proof is divided into five steps.

\verb"Step I": \eqref{Mc} \verb"is well posed "($M_{c}>-\infty$)\verb", and minimizing sequences"\\ 
\verb"are bounded in "$(W^{1,p})^m$.\\
By \textbf{(F 0)} and \textbf{(F 1)} we can write
\begin{eqnarray*}
\int F(|x|,u_{1}(x),..., u_{m}(x)) \: dx &\leq & \int F(|x|,|u_{1}(x)|,..., |u_{m}(x)|)\: dx \\
&\leq& K c + K \sum_{i=1}^{m} \int |u_{i}(x)|^{l_{i}+p}.
\end{eqnarray*}
Now for $1\leq i\leq m$, the Gagliardo-Nirenberg inequality tells us that
$$\norm{u_{i}}_{l_{i}+p}\leq K^{''}\norm{u_{i}}_{p}^{1-\sigma_{i}} \norm{D u_{i}}_{p}^{\sigma_{i}}\;   \text{with}\; \sigma_{i}=\frac{N}{p} \frac{l_{i}}{l_{i}+p}.$$
For $\varepsilon>0$ set $p_{i}=\frac{p^{2}}{N l_{i}} $ and $q_{i}$ such that $\frac{1}{p_{i}} + \frac{1}{q_{i}} =1$, by Young's inequality we obtain that
$$
	\norm{u_{i}}_{l_{i}+p}^{l_{i}+p}
	~\leq~ 
	\left({\frac{K^{''}}{\varepsilon}}^{l_{i}+p}\norm{u_{i}}_{p}^{(1-\sigma_{i})(l_{i}+p)}\right) ^{q_{i}}\frac{1}{q_{i}}
+ \frac{N l_{i}}{p^{2}} \left( \varepsilon ^{\frac{N l_{i}}{p^{2}}}\norm{D u_{i}}_{p}^{p}\right).
$$
On the other hand, it follows from \textbf{(G 0)} that
\begin{align*}
	&\int \int G(u_{1}(x),..., u_{m}(x)) V(|x-y|) G(u_{1}(y),..., u_{m}(y)) \: dx \: dy\\ 
	&\leq \int \int G(|u_{1}(x)|,..., |u_{m}(x)|) V(|x-y|) G(|u_{1}(y)|,..., |u_{m}(y)|) \: dx \: dy.
\end{align*}
Then using \textbf{(G 1)}, \textbf{(G 2)} and Young's inequality for convolutions \eqref{Yiconv}, we get that
\begin{equation*}
\begin{aligned}
\int \int G(u_{1}(x),..., u_{m}(x)) V(|x-y|) G(u_{1}(y),..., u_{m}(y)) \: dx \: dy&\\
 \leq ~ {K^{'}}^{2} \sum_{i,j=1}^{m} \norm{V}_{L^q_w} \norm{\nabs{u_{i}}^{\mu_{i}}}_{q^{'}}\norm{\nabs{u_{j}}^{\mu_{j}}}_{q^{'}}&
\end{aligned}
\end{equation*}
where $q^{'}=\frac{2q}{2q-1}$.
Therefore, the Gagliardo-Nirenberg inequality yields that
$$
	\norm{\nabs{u_{i}}^{\mu_{i}}}_{q^{'}}\leq K^{''} \norm{u_{i}}_{p}^{(1-\gamma_{i})\mu_{i}} \norm{D u_{i}}_{p}^{\gamma_{i}\mu_{i}},
$$
and thus
$$
	\norm{\nabs{u_{i}}^{\mu_{i}}}_{q^{'}}\norm{\nabs{u_{j}}^{\mu_{j}}}_{q^{'}}
	\leq 
	{K^{''}}^2 \norm{V}_{L^{q}_{w}}\norm{u_{i}}_{p}^{(1-\gamma_{i})\mu_{i}} \norm{u_{j}}_{p}^{(1-\gamma_{j})\mu_{j}}
 \norm{D u_{i}}_{p}^{\gamma_{i}\mu_{i}}  \norm{D u_{j}}_{p}^{\gamma_{j}\mu_{j}}.
$$
where $\gamma_{i} =\frac{N}{p}\frac{\frac{2q\mu_{i}}{2q-1}-p} {\frac{2q\mu_{i}}{2q-1}}$.
Setting 
$$
	\alpha_{ij}:=\frac{p}{\gamma_i\mu_i+\gamma_j\mu_j}
	\text{and}~~
	\Gamma_{ij}:=(1-\gamma_{i}) \mu_{i}+(1-\gamma_{j}) \mu_{j}
$$
we certainly have that $\alpha_{ij}(\gamma_{i}\mu_{i}+\gamma_{j}\mu_{j})=p$. 
By Young's inequality, we infer that
\begin{align*}
 \int \int G(u_{1}(x),..., u_{m}(x)) V(|x-y|) G(u_{1}(y),..., u_{m}(y)) \: dx \: dy &\\
 ~\leq ~ {K^{'}}^2\sum_{i,j=1}^m \Big(
 \frac{{K^{''}}^2}{\alpha_{ij}^{'}}\left(\frac{1}{\eps} (\norm{u}_p)^{\Gamma_{ij}}\right)^{\alpha_{ij}^{'}} + \frac{1}{\alpha_{ij}} \eps^{\alpha_{ij}} \norm{D u}_{p}^{p} \Big)&
\end{align*}
for every $\eps>0$, where $\alpha_{ij}^{'}:=\frac{\alpha_{ij}}{\alpha_{ij}-1}$. Here, note that $\alpha_{ij}>1$ for $i,j=1,\ldots,m$ due to the inequality in \textbf{(G 2)}.
Since $\norm{u}_p$ is bounded by a constant only depending on $c$, we may summarize
\begin{align*}
 \int \int G(u_{1}(x),..., u_{m}(x)) V(|x-y|) G(u_{1}(y),..., u_{m}(y)) \: dx \: dy&\\
 \leq ~ K^{'''}_\eps+\frac{m^2}{\alpha} \eps^{\alpha} \norm{D u}_{p}^{p}&.
\end{align*}
Finally using \textbf{(J 0)} and \textbf{(J 1)} and gathering all the terms, we can write:
\begin{align*}
E(U) ~\geq~   \left( a_{1} - \frac{m^{2}}{\alpha} \eps^{\alpha}- K \sum_{i=1}^{m}\frac{N l_{i}}{p^{2}} \eps ^{\frac{p^{2}}{N l_{i}}}\right)\norm{D u_{i}}_{p}^{p}&\\
 - \left(\sum_{i=1}^{m}  \frac{{
 K^{''}}^{l_{i}+p}}{\eps} c^{(1-\sigma_{i})(\frac{l_{i}}{p}+1)}\right) \frac{1}{q_{i}} - K^{'''}_\eps&,
\end{align*}
Choosing $\eps$ small enough in such a way that
$$a_{1} - \frac{m^{2}}{\alpha} \eps^{\alpha}- K \sum_{i=1}^{m}\frac{N l_{i}}{p^{2}} \eps ^{\frac{p^{2}}{N l_{i}}}~>~ 0$$
enables us to conclude that $M_{c}>-\infty$. In addition, we get that any minimizing sequence is bounded in $ (W^{1,p})^{m}$.
As we need this below, note that by the same argument, $\tilde{M}_c>-\infty$ and minimizing sequences for $\tilde{M}_c$ are bounded in $(W^{1,p})^m$.

\verb"Step II": \verb"Existence of a Schwarz symmetric minimizing sequence"\\
We claim that there exists a minimizing sequence $U_{n}=(u_{n,1},...,u_{n,m})$ of our variational problem (\ref{Mc}) which is Schwarz symmetric, i.e.
$0\leq u_{n,i}=u_{n,i}^{*}$ for $1\leq i\leq m$.\\
First note that if $u \in W^{1,p}(\mathbb{R}^{N})$ then $|u| \in W^{1,p}(\mathbb{R}^{N})$, for instance see \cite{LiLo01B}.\\
Now by virtue of \textbf{(J 0)}, \textbf{(F 0)} and \textbf{(G 0)}, we obviously have that
\begin{eqnarray}\label{moduleq}
& &E(|u_{1}|,..., |u_{m}|) \leq E(u_{1},..., u_{m})= E(U)\\
& & \forall U=(u_{1},..., u_{m}) \in W^{1,p}(\mathbb{R}^{N})^{m}.\nonumber
\end{eqnarray}
By Proposition~\ref{prop:Jrearr}, we know that
\begin{eqnarray}\label{adjleq}
\forall \;1\leq i\leq m \;\; \int J_{i}(u,|D u|)\geq \int J_{i}(u^{*},|D u^{*}|)\;\; \forall \: u  \in W^{1,p}(\mathbb{R}^{N})
\end{eqnarray}
On the other hand by the Cavalieri's principle, we have that
\begin{eqnarray}\label{adjequal}
\norm{u}_{p}=\norm{u^{*}}_{p}\;\;\; \forall \: u  \in L^{+}_{p}(\mathbb{R}^{N})
\end{eqnarray}
which implies in our context that if $U=(u_{1},..., u_{m}) \in S_{c}$ then \\ $(|u_{1}|^{*},..., |u_{m}|^{*}) \in S_{c}$.\\
As a consequence of \textbf{(F 0)}, \textbf{(G 0)}, Proposition~\ref{prop:Frearr}, Proposition~\ref{prop:Grearr} and the approximation of $(|u_{1}|^{*},..., |u_{m}|^{*})$ by sequences of iterated polarizations, 
exploiting the continuity in $L^p\cap L^{p^*}$ of the functionals involved,
we obtain the following rearrangement inequalities: 
\begin{eqnarray*}
\int F(|x|,u_{1}(x),..., u_{m}(x)) \: dx \leq  \int F(|x|, |u_{1}|^{*}(x),..., |u_{m}|^{*}(x))\: dx
\end{eqnarray*}
and
\begin{align*}
&\int \int G(u_{1}(x),..., u_{m}(x)) V(|x-y|) G(u_{1}(y),..., u_{m}(y)) \: dx \: dy\\ 
&\leq \int \int G(|u_{1}|^{*}(x),..., |u_{m}|^{*}(x)) V(|x-y|) G(|u_{1}|^{*}(y),..., |u_{m}|^{*}(y)) \: dx \: dy,
\end{align*}
for every $u_{1},...,u_{m} \in (L^{p}\cap L^{p^*})(\mathbb{R}^{N})$.\\
In conclusion, 
we have
\begin{align}\label{E leq}
E(|u_{1}|^{*},..., |u_{m}|^{*}) \leq E(|u_{1}|,..., |u_{m}|) \leq E(u_{1},..., u_{m})&\\
\text{for any} \;u_{1},...,u_{m} \in W^{1,p}(\mathbb{R}^{N})&.\nonumber
\end{align}

\verb"Step III": \verb"Lower semi-continuity along radial sequences"\\
Let $U_{n}=(u_{n,1},...,u_{n,m})=(u^{*}_{n,1},...,u^{*}_{n,m})=U_{n}^{*}$ be a Schwarz symmetric sequence with non-negative components. In this step we show that under
\textbf{(J 1)}, \textbf{(J 2)}, \textbf{(F 1)}, \textbf{(F 2)}, \textbf{(G 1)}, \textbf{(G 2)} and  \textbf{(G 3)}, the following holds:\\
If \; $U_{n}=U_{n}^{*}\rightharpoonup U \;\; in \;\; W^{1,p}(\mathbb{R}^{N})^{m}$  with some
$U=(u_{1},...,u_{m}) \in W^{1,p}_{+}(\mathbb{R}^{N})$, then
\begin{eqnarray}\label{E leq liminf}
E(U) \leq  \liminf E(U_{n}).
\end{eqnarray}
For the proof of \eqref{E leq liminf}, first observe that by virtue of \textbf{(J 2)}, it follows from the weak lower semi-continuity of $J_i$ 
shown in Theorem~\ref{thm:weakstrong} that
\begin{eqnarray}\label{ui lim inf}
\forall \;1\leq i\leq m \;\; \int J_{i}(u_{i},|D u_{i}|)\leq  \liminf \int J_{i}(u_{n,i},|D u_{n,i}|)
\end{eqnarray}
In addition, we have that
\begin{eqnarray}\label{int unj1}
\lim_{n\rightarrow \infty} \int F(|x|,u_{n,1}(x),..., u_{n,m}(x))=  \int F(|x|,u_{1}(x),..., u_{m}(x))
\end{eqnarray}
and
\begin{eqnarray}\label{int unj2}
\lim_{n\rightarrow \infty} \int \int G(u_{n,1}(x),..., u_{n,m}(x)) V(|x-y|) G(u_{n,1}(y),..., u_{n,m}(y)) \: dx \: dy \nonumber \\
= \int \int G(u_{1}(x),..., u_{m}(x)) V(|x-y|) G(u_{1}(y),..., u_{m}(y)) \: dx \: dy,
\end{eqnarray}
by the second part of Proposition~\ref{prop:J2compact} and Proposition~\ref{prop:J3compact}, respectively. Combined, \eqref{ui lim inf}--\eqref{int unj2} yield \eqref{E leq liminf}. 

\verb"Step IV:" \verb"If" \textbf{(E 0)} \verb"holds," $M_{c}$ \verb"is attained".\\
Let $U=(u_{1},\ldots,u_{m})$ be the weak limit of a Schwarz-symmetric minimizing sequence $U_n=(u_{n,1},\ldots,u_{n,m})\in W^{1,p}_+(\RR^N)^m$ for $\tilde{M}_c$
so that $c_{n,i}:=\int \abs{u_{n,i}}^p\leq c_i$ for $i=1,\ldots,m$ and $E(U_n)\to \tilde{M}_c$.
Here, recall that by the first step, $U_n$ is bounded in $W^{1,p}$ and thus weakly converges up to a subsequence.
Due to the previous step, we know that $E(U) \leq \tilde{M}_{c}$ by \eqref{E leq liminf}, and by the properties of the norm $\norm{\cdot}_{p}$,
\begin{eqnarray} \label{bounded u i}
	d_i:=\norm{u_{i}}_{p}^{p} \leq c_{i} \hspace*{2cm} \forall \; 1\leq i\leq m.
\end{eqnarray}
Using \textbf{(E 0)}, we infer that $d=(d_1,\ldots,d_m)=c$, because otherwise,
$$
	\tilde{M}_{c}< \tilde{M}_{d}\leq E(U)\leq \tilde{M}_{c},
$$
which in impossible. In particular, $U=U^*\in S_c$ and $M_c\leq E(U)=\tilde{M}_{c}\leq M_c$, whence $M_c$ is attained.

\verb"Step V: Symmetry of nonnegative minimizers without plateaus".\\
Let $U=(u_1,\ldots,u_m)\in W^{1,p}_+\cap S_c$ be a minimizer of \eqref{Mc}. 
If, for some $i\in\{1,\ldots,m\}$, $J_i(s,\cdot)$ is strictly convex for every $s\in \RR^+$, then
$\norm{Du_i}_{L^p}=\norm{Du_i^*}_{L^p}$ by Theorem~\ref{thm:minsym}. If, in addition,
$\nabs{C^*_i}=0$, then Corollary~\ref{cor:minsym} implies that $u_i=u_i^{*}$ up to a translation.

\bibliographystyle{plain}
\bibliography{wsconv-symm}
\end{document}